\documentclass[10pt]{amsart}
\usepackage{amsmath}
\usepackage{amsthm}
\usepackage{color}
\usepackage{amscd}
\usepackage{amsfonts}
\usepackage{graphicx}
\usepackage{epsfig}
\usepackage{amssymb,latexsym}
\usepackage{bm}
%\usepackage{graphicx}
%\usepackage{refcheck}

% See the ``Article customise'' template for come common customisations
\newcommand{\supp}{\operatorname{supp}}

\newcommand{\ndv}{\operatorname{Ndiv}}
\def\bbu{{\boldsymbol{u}}}

\def\bba{{\boldsymbol{a}}}

\def\bbD{{\boldsymbol{D}}}
\def\bbX{{\boldsymbol{X}}}

\def\bu{\mathbf{u}}

\def\bU{\mathbf{U}}
\def\bbx{{\boldsymbol{x}}}
\def\bx{\mathbf{x}}

\def\bby{{\boldsymbol{y}}}

\def\R{\mathbb{R}}

\def\bF{\mathbf F}
\def\bM{\mathbf M}
\def\cM{\mathcal M}
\def\bP{\mathbf P}
\def\cP{\mathcal P}
\def\bbP{\mathbb P}

\def\nB{\mathbf B}
\def\cC{\mathcal C}
\def\uno{\mathbf{1}}

\numberwithin{equation}{section}

\newtheorem{theorem}{Theorem}[section]
\newtheorem{corollary}[theorem]{Corollary}
\newtheorem{definition}{Definition}[section]

\newtheorem{problem}[theorem]{Problem}

\begin{document}

\title{On non-locality in the Calculus of Variations}
\author{Pablo Pedregal}
\date{} % delete this line to display the current date
\thanks{INEI, U. de Castilla-La Mancha, 13071 Ciudad Real, SPAIN. Supported by grant 
MTM2017-83740-P}
\begin{abstract}
Non-locality is being intensively studied in various PDE-contexts and in variational problems. The numerical approximation also looks challenging, as well as the application of these models to Continuum Mechanics and Image Analysis, among other areas. Even though there is a growing body of deep and fundamental knowledge about non-locality, for variational principles there are still very basic questions that have not been addressed so far. Taking some of these as a motivation, we describe a general perspective on distinct classes of non-local variational principles setting a program for the analysis of this kind of problems. We start such program with the simplest problem possible: that of scalar, uni-dimensional cases, under a particular class of non-locality. Even in this simple initial scenario, one finds quite unexpected facts to the point that our intuition about local, classic problems can no longer guide us for these new problems.
There are three main issues worth highlighting, in the particular situation treated:
\begin{enumerate}
\item natural underlying spaces involve different non-local types of derivatives as, for instance, fractional Sobolev spaces;
\item no convexity of integrands is required for existence of minimizers;
\item optimality is formulated in terms of quite special integral equations rather than differential equations.
\end{enumerate}
We are thus able to provide some specific answers to the initial questions that motivated our investigation. In subsequent papers, we will move on to consider the higher dimensional situation driven by the possibility that no convexity or quasiconvexity might be involved in weak lower semicontinuity in a full vector, higher dimensional situation. 
\end{abstract}
%%% BEGIN DOCUMENT
\maketitle
%\tableofcontents
\section{Introduction}
Non-locality is a hot topic these days both in PDE, and in variational problems, as well as in Continuum Mechanics and Elasticity. The motivation, the ideas, the techniques cover a huge spectrum of material hard to describe in a few paragraphs. In particular, Peridynamics has emerged as a main body of ideas of interest in the Theory of Elasticity. A lot has been written  about non-locality in Analysis and applications, and yet it looks as if some of the most basic issues still require some attention. 

To realize how far we are from understanding even the simplest of situations and how nothing we take for granted in the local case can be translated in a trivial form to this non-local scenario, we will focus on the following innocent-looking problem.
\begin{problem}\label{primero}
Consider the functional
$$
E_p(u)=\int_0^1\int_0^1\left|\frac{u(x)-u(y)}{x-y}\right|^p\,dx\,dy
$$
for competing functions $u$ in $L^p(0, 1)$. We assume first $p>2$. 
If nothing else is demanded of feasible functions, then constant functions are minimizers.  However, we will check that functions  $u\in L^p(0, 1)$ for which $E_p(u)<+\infty$, admit end-point conditions because those functions can be shown to be H\"older continuous. It is legitimate, then, to look for minimizers of $E_p(u)$ among those functions $u\in L^p(0, 1)$ complying with, say,
$$
u(0)=0, \quad u(1)=1.
$$
Three basic issues require a precise answer:
\begin{enumerate}
\item are there minimizers for such a problem?
\item if so, is the linear function $u(x)=x$ a minimizer of the problem, or even the unique minimizer?
\item what is the form of optimality conditions for such a variational problem?
\end{enumerate}
One would be tempted to let it go led by the corresponding local case in which one tries to minimize
$$
I_p(u)=\int_0^1 u'(x)^p\,dx
$$
under the same end-point conditions. It is elementary to argue that in this case the linear function $u(x)=x$ is the unique minimizer. However, there are some unexpected facts for the non-local version above. 

For the case $1\le p\le 2$, functions in $L^p(0, 1)$ with finite energy $E_p<\infty$ need not be continuous, and hence end-point constraint cannot be imposed to begin with. We use, however, the case $p=2$ for some numerical experiments, to facilitate the implementation. 
\end{problem}
The central role played by convexity for classic variational principles is something very well established to the point that the lack of this structural condition leads in many situations to lack of minimizers. Possibly, the simplest examples are the one-dimensional versions of two-well Bolza problems. 
\begin{problem}\label{nlbolza}
The variational problem
$$
I(u)=\int_0^1\left[\frac14(u'(x)^2-1)^2+\frac12u(x)^2\right]\,dx
$$
under vanishing end-point conditions lacks minimizers. Minimizing sequences are of the form of saw-tooth functions with slopes $\pm1$ refining its teeth without limit. 
The non-local version would be
$$
E(u)=\int_0^1\int_0^1\left[\frac14\left(\left(\frac{u(y)-u(x)}{y-x}\right)^2-1\right)^2+\frac12u(x)^2\right]\,dy\,dx,
$$
under the same end-point conditions. Is it true, as in the local version, that there are no minimizer for this non-local problem? Again one would be tempted to support that this is so, and once again one would face a surprise. In fact, one can also think about the variant
$$
\overline E(u)=\int_0^1\int_0^1\left[\frac14\left(\left(\frac{u(y)-u(x)}{y-x}\right)^2-1\right)^2\right]\,dy\,dx
$$
without the lower-order term. 
This time, the local version admit infinitely many minimizers, but it is not clear if all of those would be minimizers for this non-local version. Note that these examples have growth of order $4>2$.
\end{problem}

We aim at starting the systematic study of this kind of variational problems for which we would like to be able to answer very specific and concrete questions, in addition to exploring all the related functional analytical framework and its potential applicability to other areas of research. In this initial contribution, further to describing our main general motivation, we will take our ability to provide specific answers to the two previous problems as a measure of success. 

Non-local variational problems have undergone an unprecedented raise in interest, perhaps pushed by non-local theories in Continuum Mechanics. Though these are not new (see \cite{eringen} for instance), they have been revived by the more recent theory of Peridynamics (\cite{silling}, \cite{sillingetal}). At the more mathematical level, non-local variational problems were started to be considered even before Peridynamics (\cite{branro}, \cite{pedregal1}), and a lot of work in various different directions has been performed since then. Another area where non-local functionals have been considered systematically is that of imaging models and free discontinuity problems where a search of new ways to approximate difficult local functionals by non-local ones has been pursued (\cite{braides}, \cite{cortesani}). 

We can hardly mention all papers that have contributed to these areas. Note that even more works deal with non-local theories of PDEs, though this field is not of concern here. We just mention a bunch of representative contributions in various topics dealing with non-locality in variational problems:
\begin{itemize}
\item Fractional and non-local theories in elasticity, and its relationship to local models: \cite{bellidocuetomora1}, \cite{mengeshadu2}.
\item Mathematical analysis of non-local variational principles: \cite{bellidomora1}, \cite{bellidomora2}.
\item Convergence of non-local models to their local counterparts: \cite{bellidocuetomora2}, \cite{bellidomorapedregal}.
\item Relaxation and related issues: \cite{kreisbeckzappale1}, \cite{kreisbeckzappale2}, \cite{moratellini}.
\item Non-local spaces of functions: \cite{bourgainbrezismironescu}, \cite{brezis2}, \cite{dinezzapalatuccivaldinoci}, \cite{ponce1}, \cite{ponce2}, \cite{silhavy}.
\item One-dimensional problems: \cite{brezis1}, \cite{lussardi1}.
\item Image and free discontinuity models: in addition to those already cited \cite{boulanger}, \cite{brezis3}, \cite{lussardi}.
\item Non-locality in other areas: \cite{anza}, \cite{gobbino}.
\end{itemize}
So far, the family of non-local variational problems that have been considered are of the general form
\begin{equation}\label{vpnl}
E(u)=\int_{\Omega\times\Omega} W(\bbx, \bby, u(\bbx), u(\bby))\,d\bby\,d\bbx,
\end{equation}
and the central issue of weak lower semicontinuity, as a main ingredient for the direct method of the Calculus of Variations, has been studied only with respect to weak convergence for feasible functions or fields $\bbu$. This has led to some important results and some new notions of (non-local) convexity (\cite{bellidomora2}, \cite{elbau}, \cite{pedregal1}). However, no specific variational problem has been examined from the viewpoint of existence of minimizers, in part because Lebesgue spaces where this analysis has been carried out do not allow for boundary values to be assigned directly. This is also one main trouble with variational problems over fractional Sobolev spaces (\cite{dinezzapalatuccivaldinoci}) where, typically, boundary conditions are imposed by demanding that feasible functions are identical to a preassigned function off the domain $\Omega$, or at least in a non-negligible strip around $\partial\Omega$ (\cite{bellidocuetomora1}). Apparently, the use of fractional Sobolev spaces in variational problems over bounded domains still need some new ideas. In this context of fractional Sobolev spaces, the so-called fractional gradient has been considered and extensively studied, together with parallel central results with respect to its local counterpart. Check \cite{ponce3}, \cite{shiehspector}, \cite{silhavy}. Variational principles explicitly depending on the fractional gradient have been considered (\cite{shiehspector}), even in a vector setting involving the property of polyconvexity (\cite{bellidocuetomora1}). 

Going back to problems of the form \eqref{vpnl}, two important topics have been considered in greater detail: relaxation of this non-local variational problems, and convergence to local theories when the horizon parameter of the non-local interaction is sent to zero. The analysis of the first has shown some unexpected results with no parallelism in local problems, as sometimes relaxation takes the problem outside the natural family of variational principles (\cite{kreisbeckzappale2}, \cite{moratellini}); the convergence in the latter has led to some significant limit facts (\cite{bellidocuetomora2}, \cite{bellidomorapedregal}). 

Despite all of these deep developments, there is no explicit example, even very simple cases as the ones stated in Problems \ref{primero} and \ref{nlbolza}, where basic questions have been answered. One point we would like to stress is that even if one starts in a big space for a non-local variational problem (like a Lebesgue space), the class of functions for which the functional takes on finite values may be a much more restrictive family of more regular functions. This is trivial when the integrand in the functional depends explicitly on the weak gradient, but it is not so clear, a priori, if there is no explicit gradient dependence. This is one natural reason of why weak lower semicontinuity was started to be studied in Lebesgue spaces, rather than on more restrictive spaces of functions. 

On the other hand, we would like to introduce some formalism to somehow classify non-local variational principles of various kinds (Section \ref{dos}). In particular, we set here a whole program to undertake the understanding of such non-local variational principles in their fundamental questions. We select one of those frameworks, and start with such a program for the simplest case possible: that of scalar, one-dimensional problems. More specifically:
\begin{enumerate}
\item Section \ref{tres}: we focus on the natural, underlying spaces to appropriately setup this sort of non-local variational problems. Though these spaces turn out to be, in the one-dimensional setting, the standard fractional Sobolev spaces, the variational problems themselves are quite different from the local classical ones.
\item Those new, non-local variational problems are studied from the point of view of the direct method in Section \ref{cuatro}, establishing a basic weak lower semicontinuity result, and, as a consequence, a typical existence theorem. It is remarkable that no convexity whatsoever is required.
\item Section \ref{cinco}. Optimality is explored in this section. Quite surprisingly, it can be formulated in terms of some special integral equations.
\item In Section \ref{seis}, we spend some more time analyzing such integral equations and their solutions in some easy examples to gain some intuition. 
\item In the scalar, one-dimensional situation, simple approximations of optimal solutions under convexity, can be performed. In particular, we will see an approximated profile of the optimal solution for Problem \ref{primero}. 
\end{enumerate}

As a result of our investigation in these sections, we are able to provide an answer to Problems \ref{primero} and \ref{nlbolza}. Concerning the first, we can say that there are minimizers; in fact, due to strict convexity, there is a unique such minimizer, but it is not the linear function $u(x)=x$. This can be easily checked through optimality conditions that, as indicated above, come in the form of some integral equation: as usual, given a functional equation, it may be easy or doable to check if a given function is or is not a solution; it may be impossible to find the solution. What is a bit shocking is that there is no convexity requirement involved for the existence of minimizers: for every continuous, coercive integrand there are minimizers !! In particular, there are such optimal solutions for the non-local version of the two-well Bolza problem considered in Problem \ref{nlbolza}. 

Our results here for the scalar, one-dimensional situation are just the starting point to proceeding to the higher dimensional case, or even the vector case. We will do so in forthcoming contributions.

\section{General overview}\label{dos}
Let us start form the well-known local case in which our funcional is of integral-type
$$
I(\bbu)=\int_\Omega W(\bbx, \bbu(\bbx), \nabla\bbu(\bbx))\,d\bbx
$$
where
$$
W(\bbx, \bbu, \bF):\Omega\times\R^n\times\R^{n\times N}\to\R
$$
is a suitable integrand, and $\Omega\subset\R^N$ is a bounded, regular domain. This functional can be interpreted in many different ways depending on the context where modeling is pursued. In hyperelasticity, for example, it may be a way to measure the energy associated with deformations in such a way that global minimizers would correspond to stable equilibrium configurations. For the sake of simplicity, we will omit the $(\bbx, \bbu)$ dependence as it is not relevant for what we are about to say, and write instead
$$
I(\bbu)=\int_\Omega W(\nabla\bbu(\bbx))\,d\bbx.
$$
It is well established that the property of quasiconvexity of $W(\bF)$ is a necessary and sufficient condition for the weak lower semicontinuity of $I$ over typical Sobolev spaces (\cite{dacorogna}, \cite{rindler}), which in turn is one of the two main ingredients for the direct method of the Calculus of Variations. When this property does not hold, then non-existence of minimizers may occur, and the analysis follows by exploring relaxation. 

One general way to express the passage from a functional like $I(\bbu)$ to its relaxed version involves the use of gradient Young measures (\cite{pedregal}, \cite{rindler}) to write
\begin{equation}\label{relaxation}
\overline I(\bbu)=\int_\Omega\int_{\R^{n\times N}}W(\bF)\,d\nu_{\bbx, \bbu}(\bF)\,d\bbx,
\end{equation}
where 
$$
\nu_\bbu=\{\nu_{\bbu, \bbx}\}_{\bbx\in\Omega},\quad \supp\nu_{\bbx, \bbu}\subset\R^{n\times N},
$$ 
is a family of probability measures, one for each $\bbx\in\Omega$, referred to as the associated gradient Young measure. Such family of probability measures generated by relaxation encodes the information to build minimizing sequences for the original problem. In addition to enjoying fundamental properties not fully yet understood, we also have 
$$
\nabla\bbu(\bbx)=\int_\bM \bF\,d\nu_{\bbu, \bbx}(\bF).
$$
It is not our objective, nor is the appropriate place, to discuss further this issue. Our aim is to focus on \eqref{relaxation} as a way to define classes of non-local functionals by selecting rules to determine the family of probability measures
$$
(\bbx, \bbu)\mapsto \nu_{\bbx, \bbu}.
$$
\begin{definition}\label{abstracta}
For a bounded, regular domain $\Omega\subset\R^N$, consider a mapping 
$$
\mu=\mu_{\bx, \bbu}: \Omega\times\cM(\Omega; \R^n)\mapsto\cP(\R^{n\times N})
$$
where $\cM(\Omega; \R^n)$ designates the class of measurable functions in $\Omega$ taking values in $\R^n$, and $\cP(\R^{n\times N})$ stands for the set of Borel probability measures supported in $\R^{n\times N}$. We say that such a mapping generates the family of variational problems corresponding to functionals
$$
I:\cM(\Omega; \R^n)\to\R,\quad I(\bbu)=\int_\Omega \int_{\R^{n\times N}}W(\bbx, \bbu(\bbx), \bF)\,d\mu_{\bbx, \bbu}(\bF)\,d\bbx,
$$
for Carath\'eodory integrands
$$
W(\bbx, \bbu, \bF):\Omega\times\R^n\times\R^{n\times N}\to\R
$$
which are measurable in $\bbx$ and continuous in $(\bbu, \bF)$, provided all the maps
$$
\bbx\mapsto \int_{\R^{n\times N}}W(\bbx, \bbu(\bbx), \bF)\,d\mu_{\bbx, \bbu}(\bF)
$$
are measurable.
For each given $\bbu\in\cM(\Omega; \R^n)$, the mapping
$$
\bbD_\mu \bbu(\bbx):\Omega\mapsto\mu_{\bbx, \bbu}\in\cP(\R^{n\times N})
$$
is called the corresponding non-local gradient for $\bbu$.
Particular rules may require more restrictions on functions $\bbu$ than just measurability. 
\end{definition}
Let us remind readers that the most straightforward way to define probability measures in $\cP(\R^{n\times N})$ consists of determining its action on continuous functions (with a vanishing limit at infinity)
$$
\langle \Phi, \mu\rangle=\int_{\R^{n\times N}}\Phi(\bF)\,d\mu(\bF),
$$
and one of the most efficient ways to define such probability measures proceeds  through the standard process of pushing forward with suitable maps; namely, if $(\bbP, \Sigma, \pi)$ is a probability space and
$$
\Psi(\bbX):\bbP\to\R^{n\times N}
$$
is a measurable mapping, then the push-forward $\Psi_*(\pi)$ of $\pi$ on to $\R^{n\times N}$ is the probability measure supported in $\R^{n\times N}$ defined through 
$$
\langle\Phi, \Psi_*(\pi)\rangle=\langle\Phi(\Psi), \pi\rangle.
$$
We will be using this procedure in most examples without further notice. 

We consider several initial such rules to generate classes of non-local variational problems, and then focus on the one we would like to concentrate our analysis on here. The rule above that has motivated this concept is not a true instance because underlying gradient Young measures come from relaxation and cannot be associated with each $\bbu$ with no reference to additional ingredients. In fact, they are chosen by minimizing an already-existing functional. 

\begin{enumerate}
\item The trivial case corresponds to local, classical variational principles for Sobolev functions
$$
\mu_{\bbx, \bbu}=\delta_{\nabla \bbu(\bbx)}(\bF),\quad \langle\Phi, \mu_{\bbx, \bbu}\rangle=\Phi(\nabla \bbu(\bbx)).
$$
The corresponding gradient is just the usual weak gradient for Sobolev functions.
\item The fractional case
$$
\langle\Phi, \mu_{\bbx, \bbu}\rangle=\int_\Omega\Phi\left(\frac{\bbu(\bby)-\bbu(\bbx)}{|\bby-\bbx|^\alpha}\otimes\frac{\bby-\bbx}{|\bby-\bbx|}\right)\,d\bby
$$
for an appropriate exponent $\alpha$. The associated non-local gradient would be
the probability measure
$$
\bbD\bbu(\bbx)=\frac1{|\Omega|}\frac{\bbu(\bby)-\bbu(\bbx)}{|\bby-\bbx|^\alpha}\otimes\frac{\bby-\bbx}{|\bby-\bbx|}\,\left.d\bby\right|_\Omega.
$$
\item The gradient, average case
$$
\langle\Phi, \mu_{\bbx, \bbu}\rangle=\int_\bbP\Phi\left(\frac1{V(\bP(\bbx, \bbX))}\int_{\bP(\bbx, \bbX)}\nabla\bbu(\bby)\,d\bby\right)\,d\bbX
$$
where $\bbX\in \bbP$, and $\bbP$ is a probability space of parameters, each of which, together with $\bbx\in\Omega$, determines a measurable subset
$$
\bP(\bbx, \bbX)\subset\Omega
$$
with $N$-dimensional measure $V(\bP(\bbx, \bbX))$, where to perform the average of the gradient of $\bbu$. The obvious case is
$$
\langle\Phi, \mu_{\bbx, \bbu}\rangle=\int_0^H\Phi\left(\frac1{V(\nB(\bbx, r))}\int_{\nB(\bbx, r)}\nabla\bbu(\bby)\,d\bby\right)\,dr,
$$
where $H>0$ would be the ``horizon" of the non-locality. Balls are understood intersected with $\Omega$. In this situation, non-local gradients are
$$
\bbD\bbu(\bbx)=\frac1{V(\bP(\bbx, \bbX))}\int_{\bP(\bbx, \bbX)}\nabla\bbu(\bby)\,d\bby\,d\bbX.
$$
\item The mean rule. For every mapping $\mu$ as in Definition \ref{abstracta}, we can consider its mean rule $\overline\mu$, which is another form of non-locality, namely
$$
\overline\mu_{\bx, \bbu}:\Omega\times\cM(\Omega; \R^n)\mapsto\cP(\R^{n\times N})
$$
and
$$
\langle\overline\mu_{\bx, \bbu}, \Phi\rangle=\Phi\left(\int_{\R^{n\times N}}\bF\,d\mu_{\bx, \bbu}(\bF)\right).
$$
In compact form, we can write
$$
\overline\mu_{\bbx, \bbu}=\delta_{\bM_1(\bbx, \bbu)}(\bF)
$$
where
$$
\bM_1(\bbx, \bbu)=\int_{\R^{n\times N}}\bF\,d\mu_{\bx, \bbu}(\bF)
$$
is the first moment of $\mu_{\bbx, \bbu}$, and $\delta$ is the Dirac mass.
The corresponding non-local gradient for $\overline\mu$ is just the average of the non-local gradient of $\mu$, i.e.
$$
\bbD_{\overline \mu}\bbu(\bbx)=\bM_1(\bbx, \bbu).
$$
Note the difference between the variational principles associated with $\mu_{\bbx, \bbu}$ and with its mean $\overline\mu_{\bbx, \bbu}$
\begin{align}
I(\bbu)=&\int_\Omega \int_{\R^{n\times N}}W(\bbx, \bbu(\bbx), \bF)\,d\mu_{\bbx, \bbu}(\bF)\,d\bbx,\nonumber\\
\overline I(\bbu)=&\int_\Omega W\left(\bbx, \bbu(\bbx), \int_{\R^{n\times N}}\bF\,\mu_{\bbx, \bbu}(\bF)\,d\bF\right)\,d\bbx\nonumber\\
=&\int_\Omega W\left(\bbx, \bbu(\bbx), \bbD_{\overline\mu}(\bbx)\right)\,d\bbx.\nonumber
\end{align} 
\end{enumerate}

\subsection{One special class of non-locality} 
We would like to focus, however, on a different type of non-locality motivated by its potential interpretation in the context of hyper-elasticity, though we remain at a purely mathematical level at this stage.
Our basic postulate is the assumption that the internal energy $E(\bbu)$ associated with a deformation of a body
$$
\bbu(\bbx):D\subset\R^N\to\R^N,
$$
where $D$ is some selected, unit reference domain in $\R^N$, is measured with a density $W$ acting on the basic building blocks for deformations, which are taken to be the affine maps from $\R^N$ to $\R^N$. We know that the linear part of these are identified, once a basis of $\R^N$ has been chosen, with $N\times N$-matrices $\bF$. Therefore we postulate that the internal energy is translation-invariant, that the main variables for $W$ are $N\times N$-matrices, and 
\begin{equation}\label{postulado}
W(\bF):\R^{N\times N}\to\R,\quad W(\bF)=E(\bbu_\bF),
\end{equation}
when we take 
\begin{equation}\label{afin}
\bbu_\bF(\bbx)=\bba+\bF\bbx,\quad \bbx\in D, \quad \bba\in\R^N.
\end{equation}
From here, and realizing that affine deformations are characterized by $\nabla \bbu(\bbx)=\bF$, one proceeds with the standard local theory in which the internal energy associated with a general deformation $\bu(\bbx)$ is taken to be
$$
E(\bbu)=\int_\Omega W(\nabla\bbu(\bbx))\,d\bbx.
$$
Affine deformations in \eqref{afin} and their linear parts $\bF$ are also generically characterized, in a unique way, as being generated by the images of $N+1$ generic points 
$$
\bbx_0, \bbx_1, \dots, \bbx_N\in D
$$
and their images
$$
\bbu_\bF(\bbx_0), \bbu_\bF(\bbx_1),\dots, \bbu_\bF(\bbx_N)\in\R^N,
$$
that is to say
$$
\bF=\begin{pmatrix}\bbu_\bF(\bbx_1)-\bbu_\bF(\bbx_0)&\dots&\bbu_\bF(\bbx_N)-\bbu_\bF(\bbx_0)\end{pmatrix}
\begin{pmatrix}\bbx_1-\bbx_0\dots&\bbx_N-\bbx_0\end{pmatrix}^{-1}.
$$
This last formula is trivial, but it yields, when the affine deformation $\bbu_\bF$ is replaced by any feasible $\bbu$, a non-local way to measure the internal energy $E(\bbu)$ through the multiple integral
\begin{align}
\int_{\Omega^{N+1}} W\left(\begin{pmatrix}\bbu(\bbx_1)-\bbu(\bbx_0)\dots\bbu(\bbx_N)-\bbu(\bbx_0)\end{pmatrix}
\begin{pmatrix}\bbx_1-\bbx_0\dots\bbx_N-\bbx_0\end{pmatrix}^{-1}\right)&\times\nonumber\\
\times\,d\bbx_N\dots d\bbx_1\,d\bbx_0.&\nonumber
\end{align}
Both ways are consistent for the affine deformation $\bbu_\bF$ (provided $|\Omega|=1$). 

To simplify notation put
\begin{gather}
\bbX=\begin{pmatrix}\bbx_1&\dots&\bbx_N\end{pmatrix}\in\R^{N\times N},\quad \bbx=\bbx_0,\quad \uno=(1, \dots, 1)\in\R^N,\nonumber\\
\bbx\otimes\uno=\begin{pmatrix}\bbx&\dots&\bbx\end{pmatrix}\in\R^{N\times N},\nonumber
\end{gather}
and then
\begin{gather}
\bbu(\bbX)=\begin{pmatrix}\bbu(\bbx_1)& \bbu(\bbx_2)&\dots,&\bbu(\bbx_N)\end{pmatrix}\in\R^{N\times N},\nonumber\\
\bbu(\bbx, \bbX)=\bbu(\bbX)-\bbu(\bbx)\otimes\uno\in\R^{N\times N},\nonumber\\
\bbD\bbu(\bbx, \bbX)=(\bbu(\bbX)-\bbu(\bbx)\otimes\uno)(\bbX-\bbx\otimes\uno)^{-1}.\nonumber
\end{gather}
Our way to measure internal energy in a non-local way is written in the compact form
\begin{align}
E(\bbu)=&\int_\Omega\int_{\Omega^N} W(\bbu(\bbx, \bbX)(\bbX-\bbx\otimes\uno)^{-1})\,d\bbX\,d\bbx\nonumber\\
=&\int_\Omega\int_{\Omega^N} W((\bbu(\bbX)-\bbu(\bbx)\otimes\uno)(\bbX-\bbx\otimes\uno)^{-1})\,d\bbX\,d\bbx\nonumber\\
=&\int_\Omega\int_{\Omega^N} W(\bbD\bbu(\bbx, \bbX))\,d\bbX\,d\bbx.\nonumber
\end{align}
This corresponds exactly to the rule, in the context of Definition \ref{abstracta}, 
$$
\langle\Phi, \mu_{\bbx, \bbu}\rangle=\int_{\Omega^N}\Phi(\bbD \bbu(\bbx, \bbX))\,d\bbX.
$$
From here, it is easy to generalize it to incorporate other dependencies by putting
\begin{equation}\label{formageneral}
E(\bbu)=\int_\Omega\int_{\Omega^N} W(\bbx, \bbu(\bbx), (\bbu(\bbX)-\bbu(\bbx)\otimes\uno)(\bbX-\bbx\otimes\uno)^{-1})\,d\bbX\,d\bbx,
\end{equation}
or, in compact form,
\begin{equation}\label{formageneralcompacta}
E(\bbu)=\int_\Omega\int_{\Omega^N} W(\bbx, \bbu(\bbx), \bbD\bbu(\bbx, \bbX))\,d\bbX\,d\bbx.
\end{equation}
The functional we have written in \eqref{formageneral} is a general vector problem for a density
$$
W(\bbx, \bbu, \bF):\Omega\times\R^N\times\R^{N\times N}\to\R,
$$
and competing mappings 
$$
\bbu(\bbx):\Omega\subset\R^N\to\R^N.
$$ 
Nothing keeps us from considering the general situation in which
$$
W(\bbx, \bbu, \bF):\Omega\times\R^n\times\R^{n\times N}\to\R,
$$
for feasible mappings 
$$
\bbu(\bbx):\Omega\subset\R^N\to\R^n,
$$ 
where dimension $n$ could be different from $N$. In particular, the case $n=1$ 
\begin{align}
E(u)=&\int_\Omega\int_{\Omega^N} W(\bbx, u(\bbx), (u(\bbX)-u(\bbx)\uno)(\bbX-\bbx\otimes\uno)^{-1})\,d\bbX\,d\bbx\nonumber\\
=&\int_\Omega\int_{\Omega^N} W(\bbx, u(\bbx), \bbD u(\bbx, \bbX))\,d\bbX\,d\bbx\nonumber
\end{align}
will be referred to as the scalar case. It is not difficult to envision more general ingredients that can be added to this raw model, like implementing a horizon parameter $\delta$ to tame the range of non-local interactions. 

Our intention here is to start the mathematical analysis of this kind of non-local variational problems. Nothing will be claimed at this stage from the mechanical point of view. 

\subsection{Program}
As usual, the fundamental steps we would like to start covering concerning these non-local variational problems can be organized in the following way:
\begin{enumerate}
\item Natural spaces of functions where non-local functionals are well-defined.
\item Structural hypotheses on integrands to guarantee some suitable weak-lower semicontinuity.
\item Existence theorems. 
\item Optimality conditions.
\item Relaxation, if applicable.
\end{enumerate}
On the other hand, one would proceed covering:
\begin{enumerate}
\item Scalar, one-dimensional problems: $n=N=1$.
\item Scalar, higher-dimensional problems: $n=1$, $N>1$. 
\item Vector problems: $n, N>1$.
\end{enumerate}
It is a program to fully understand such family of variational problems. In this initial contribution, we will be contented dealing with the scalar, one-dimensional problem as a way to anticipate unexpected facts, difficulties, places where emphasis is recommended, etc. In particular, to measure success in this regard, we seek to provide as complete an answer as possible to Problems \ref{primero} and \ref{nlbolza}. 

\section{Spaces}\label{tres}
Each family of non-local problems gives rise to its own collection of natural functional spaces by demanding that all functions
\begin{equation}\label{pspace}
\bbx\in\Omega\mapsto \langle |\cdot|^p, \mu_{\bbx, u}\rangle
\end{equation}
belong to $L^p(\Omega)$ for  
$u\in L^p(\Omega)$, 
and $p\in[1, \infty]$. We are talking about the following collection of functions
\begin{equation}\label{ppspace}
\left\{u\in L^p(\Omega); \int_\Omega\int_{\R^N}|\bF|^p\,d\mu_{\bbx, u}(\bF)\,d\bbx<\infty\right\}.
\end{equation}
Let us examine, for the sake of illustration, some of the initial situations in the last section. 
\begin{enumerate}
\item For the classical local case, natural spaces are, of course, the standard Sobolev spaces $W^{1, p}(\Omega)$. There is nothing else to say.
\item For the fractional case, we are concerned about functions $u\in L^p(\Omega)$ such that
$$
\int_{\Omega\times\Omega}\frac{|u(\bby)-u(\bbx)|^p}{|\bby-\bbx|^{\alpha p}}\,d\bby\,d\bbx<+\infty.
$$
For appropriate exponents $\alpha$, these are the fractional Sobolev spaces that are being extensively studied these days. We have already commented about this in the Introduction. 
\item For the gradient, average situation we must be concerned about functions $u\in L^p(\Omega)$ for which 
$$
\int_\Omega\int_P\frac1{V(\bP(\bbx, \bbX))^p}\left|\int_{\bP(\bbx, \bbX)}\nabla u(\bby)\,d\bby\right|^p\,d\bbX\,d\bbx<\infty.
$$
As far as we can tell, these family of functions have not yet been examined. 
\item As in the previous section, for each mapping $\mu$ and its corresponding space based on \eqref{pspace}, there is a corresponding space changing \eqref{pspace} to
$$
\bbx\in\Omega\mapsto \left|\langle \bF, \mu_{\bbx, u}\rangle\right|^p,
$$
and \eqref{ppspace} to
$$
\left\{u\in L^p(\Omega); \int_\Omega\left|\int_{\R^N}\bF\,d\mu_{\bbx, u}(\bF)\right|^p\,d\bbx<\infty\right\}.
$$

\end{enumerate}
The family of spaces that we would like to consider, from the perspective of the non-local variational problems that we want to examine, are
$$
NW^{1, p}(\Omega)=\{u\in L^p(\Omega): \bbD u(\bbx, \bbX)\in L^p(\Omega\times\Omega^N; \R^N)\}.
$$
One starting point would be to study the relationship of such space to the 
 standard Sobolev space $W^{1, p}(\Omega)$, especially in sight of results in \cite{bourgainbrezismironescu}, and other similar articles. But, given that we do not have any initial intuition on the corresponding family of non-local variational problems, we begin by exploring the one-dimensional situation $N=1$. In this case
$$
\bbD u(x, X)=\frac{u(X)-u(x)}{X-x}.
$$
It looks reasonable to consider the space
$$
NW^{1, p}(0, 1)=\{u\in L^p(0, 1):\bbD u(x, X)\in L^p((0, 1)^2)\},
$$
for an exponent $p\in[1, \infty)$, and
$$
NW^{1, \infty}(0, 1)=\{u\in L^\infty(0, 1):\bbD u(x, X)\in L^\infty((0, 1)^2)\}.
$$
The natural norm in these spaces is
\begin{equation}\label{naturalnorm}
\|u\|_{NW^{1, p}(0, 1)}\equiv\|u\|_{L^p(0, 1)}+\|\bbD u\|_{L^p((0, 1)^2)}
\end{equation}
for all $p$. The case $p=2$ corresponds to a inner product
$$
\langle u, v\rangle=\int_0^1 u(x)v(x)\,dx+\int_{(0, 1)^2}\bbD u(x, X)\bbD v(x, X)\,dX\,dx.
$$
We put $NH^1(0, 1)$ to mean $NW^{1, 2}(0, 1)$. 

In this one-dimensional situation, we recognize that these spaces are the standard fractional Sobolev spaces (\cite{bourgainbrezismironescu}, \cite{dinezzapalatuccivaldinoci}) for 
$$
s=1-1/p,\quad 1<p<\infty.
$$
We will, however, keep the notation $NW^{1, p}(0, 1)$ to be consistent with the higher dimensional case, which will be addressed in a forthcoming work. As far as we can tell, these spaces in the higher dimensional situation have not been considered yet.

As a consequence of the fact 
$$
NW^{1, p}(0, 1)=W^{1/q, p}(0, 1),\quad \frac1p+\frac1q=1,
$$ 
we have a lot of fundamental results at our disposal. We focus especially on two of them taken directly from \cite{dinezzapalatuccivaldinoci}. We only need here the one-dimensional versions.
\begin{theorem}[Theorem 7.1, \cite{dinezzapalatuccivaldinoci}]\label{uno}
Every bounded set in $NW^p(0, 1)$ is precompact in $L^p(0, 1)$.
\end{theorem}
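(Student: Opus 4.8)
The plan is to prove the compact embedding $NW^{1,p}(0,1)\hookrightarrow\hookrightarrow L^p(0,1)$ directly, exploiting the fact that a bounded subset $\mathcal F\subset NW^{1,p}(0,1)$ is precisely a family of functions with uniformly bounded $L^p$ norm and uniformly bounded Gagliardo seminorm
\[
[u]^p:=\int_0^1\int_0^1\frac{|u(X)-u(x)|^p}{|X-x|^p}\,dX\,dx=\|\bbD u\|_{L^p((0,1)^2)}^p\le M.
\]
Here the exponent $p$ in the denominator is exactly $1+sp$ for $s=1/q=1-1/p$, which is why this seminorm coincides with the one of $W^{1/q,p}$. The strategy is to show that $\mathcal F$ is totally bounded in $L^p(0,1)$, by approximating every $u\in\mathcal F$, uniformly in $u$, by an element of a precompact family of local averages, and by controlling the approximation error through the seminorm bound.

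Concretely, for $\varepsilon>0$ I would introduce the averaging operator
\[
J_\varepsilon u(x)=\frac1{|I_\varepsilon(x)|}\int_{I_\varepsilon(x)}u(y)\,dy,\qquad I_\varepsilon(x)=(x-\varepsilon,x+\varepsilon)\cap(0,1),
\]
and establish the key quantitative estimate
\[
\|u-J_\varepsilon u\|_{L^p(0,1)}^p\le C\,\varepsilon^{\,p-1}[u]^p.
\]
This follows by writing $u(x)-J_\varepsilon u(x)$ as the average over $I_\varepsilon(x)$ of $u(x)-u(y)$, applying Jensen's inequality, and then using $|x-y|<\varepsilon$ on $I_\varepsilon(x)$ to insert the factor $|x-y|^{-p}$ and recover the double integral $[u]^p$ after integrating in $x$ (the factor $\varepsilon^{p-1}$ is $\varepsilon^p$ from $|x-y|^p<\varepsilon^p$ divided by the lower bound $|I_\varepsilon(x)|\ge\varepsilon$). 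Since $[u]^p\le M$ for every $u\in\mathcal F$ and $p>1$, the right-hand side tends to $0$ as $\varepsilon\to0$ uniformly over $\mathcal F$.

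Next, for each fixed $\varepsilon$ I would check that the averaged family $\{J_\varepsilon u:u\in\mathcal F\}$ is precompact in $L^p(0,1)$. Indeed, $\|J_\varepsilon u\|_{L^\infty}\le\varepsilon^{-1}\|u\|_{L^1}\le\varepsilon^{-1}\|u\|_{L^p}$ gives equiboundedness, while estimating $|J_\varepsilon u(x_1)-J_\varepsilon u(x_2)|$ by the $L^1$-mass of $u$ times the relative measure of the symmetric difference of the two averaging windows produces a modulus of continuity uniform over the bounded family; hence Ascoli--Arzel\`a yields precompactness in $C([0,1])$, and a fortiori in $L^p(0,1)$. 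Combining the two steps, $\mathcal F$ lies, for every $\varepsilon$, within $L^p$-distance $(CM)^{1/p}\varepsilon^{(p-1)/p}$ of the totally bounded set $\{J_\varepsilon u\}$; letting $\varepsilon\to0$ shows that $\mathcal F$ is totally bounded, and since $L^p(0,1)$ is complete, it is precompact.

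The main obstacle is the single quantitative inequality $\|u-J_\varepsilon u\|_{L^p}^p\lesssim\varepsilon^{p-1}[u]^p$; everything else is soft functional analysis (Ascoli--Arzel\`a plus the total-boundedness criterion). The crucial point is that this estimate be uniform in $u$, which it is, because the constant is absolute and only the seminorm, bounded by $M$, enters. An equivalent route would replace local averages by translations and invoke the Riesz--Fr\'echet--Kolmogorov criterion, the corresponding estimate being $\int_0^{1-h}|u(x+h)-u(x)|^p\,dx\le C\,h^{p-1}[u]^p$; this is equally clean but demands a little extra care with the boundary layer of width $h$ when upgrading interior $L^p$-equicontinuity to precompactness on the whole interval, a subtlety that the averaging argument avoids.
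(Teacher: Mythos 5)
Your proof is correct, but it cannot coincide with ``the paper's proof'' for a simple reason: the paper does not prove this statement at all. It identifies $NW^{1,p}(0,1)$ with the fractional Sobolev space $W^{1/q,p}(0,1)$, $1/p+1/q=1$ (the denominator exponent $p$ equals $1+sp$ with $s=1-1/p$, exactly as you observe), and then quotes the compactness result as Theorem 7.1 of \cite{dinezzapalatuccivaldinoci} as a black box. What you supply is a self-contained substitute for that citation, and it is sound: the central estimate $\|u-J_\varepsilon u\|_{L^p}^p\le\varepsilon^{p-1}[u]^p$ holds exactly as you describe (Jensen, then $|x-y|^p\le\varepsilon^p$ against $|I_\varepsilon(x)|\ge\varepsilon$; the constant is in fact $C=1$), and combined with precompactness of $\{J_\varepsilon u:u\in\mathcal F\}$ for each fixed $\varepsilon$ it yields total boundedness, hence precompactness, of $\mathcal F$ in the complete space $L^p(0,1)$. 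Your argument is actually close in spirit to the proof inside the cited reference, which approximates by averages over a fixed grid of cubes and uses finite dimensionality of the approximants where you use sliding averages plus Ascoli--Arzel\`a. Two small points deserve care. First, in the equicontinuity step the term $\left|\int_{I_1}u-\int_{I_2}u\right|$ must be bounded via H\"older against the uniform $L^p$ bound, namely $\int_{I_1\triangle I_2}|u|\le|I_1\triangle I_2|^{1/q}\|u\|_{L^p}$; the ``$L^1$-mass times relative measure of the symmetric difference'' is not a valid bound by itself, since a spike of $u$ could sit inside the symmetric difference, and one must also account for the difference of the normalizing factors $1/|I_\varepsilon(x_i)|$, harmless because $x\mapsto|I_\varepsilon(x)|$ is Lipschitz and bounded below by $\varepsilon$. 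Second, your argument genuinely requires $p>1$ (otherwise $\varepsilon^{p-1}$ does not vanish); this is consistent with the paper, whose identification $NW^{1,p}=W^{1/q,p}$ is stated for $1<p<\infty$ and whose applications take $p>2$, but the restriction should be made explicit since the statement as written allows all $p$. In the end, the paper's route buys brevity and generality (the cited theorem covers extension domains in any dimension), while yours buys a transparent, elementary, purely one-dimensional proof with an explicit approximation rate $\varepsilon^{(p-1)/p}$.
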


 In particular, we would like to highlight the following. 

\begin{corollary}\label{este}
Let $\{u_j\}$ be a bounded sequence in $NW^p(0, 1)$. Then there is a subsequence, not relabeled, and a function $u\in NW^{1, p}(0, 1)$ such that 
\begin{equation}\label{surconv}
u_j\to u\hbox{ in }L^p(0, 1),\quad \bbD u_j(x, X)\to\bbD u(x, X)\hbox{ for a.e. }(x, X)\in(0, 1)^2,
\end{equation}
and
$$
\bbD u_j(x, X)\rightharpoonup\bbD u(x, X) \hbox{ in }L^p((0, 1)^2).
$$
\end{corollary}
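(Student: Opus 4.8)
The plan is to run the direct-method machinery in three stages: first extract an $L^p$-convergent subsequence, then upgrade to pointwise a.e.\ convergence of the difference quotients, and finally identify the weak limit of $\bbD u_j$ in $L^p((0,1)^2)$ with $\bbD u$. Throughout I take $1<p<\infty$, the range in which $NW^{1,p}(0,1)$ coincides with a reflexive fractional Sobolev space, so that bounded sequences admit weakly convergent subsequences.

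First I would invoke Theorem \ref{uno}: since $\{u_j\}$ is bounded in $NW^{1,p}(0,1)$, it is precompact in $L^p(0,1)$, so after passing to a subsequence (not relabeled) there is $u\in L^p(0,1)$ with $u_j\to u$ in $L^p(0,1)$, which is already claim (1). Passing to a further subsequence I may assume in addition that $u_j\to u$ a.e.\ in $(0,1)$; let $A\subset(0,1)$ be the Lebesgue-null set off which this pointwise convergence holds.

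The second step transfers this pointwise convergence to the difference quotients. Writing $\bbD u(x,X)=\frac{u(X)-u(x)}{X-x}$, for every $(x,X)$ with $x\notin A$, $X\notin A$ and $x\neq X$ one has $u_j(x)\to u(x)$ and $u_j(X)\to u(X)$, hence $\bbD u_j(x,X)\to\bbD u(x,X)$. By Fubini the exceptional set $\{x\in A\}\cup\{X\in A\}\cup\{x=X\}$ is Lebesgue-null in $(0,1)^2$, so $\bbD u_j\to\bbD u$ a.e.\ in $(0,1)^2$, which is claim (2).

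The final and most delicate step is to show that $\bbD u\in L^p((0,1)^2)$ and that the convergence is weak. Boundedness of $\{u_j\}$ in $NW^{1,p}(0,1)$ furnishes a uniform bound for $\{\bbD u_j\}$ in $L^p((0,1)^2)$. On a set of finite measure an $L^p$-bound with $p>1$ forces uniform integrability (de la Vall\'{e}e Poussin), so Vitali's theorem promotes the a.e.\ convergence of Step 2 to strong $L^1$ convergence; testing first against $L^\infty$ and then, using the $L^p$-bound, against a dense family in $L^{p'}$ identifies the weak $L^p$-limit as the a.e.\ limit $\bbD u$, while Fatou's lemma gives $\bbD u\in L^p((0,1)^2)$. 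This yields $u\in NW^{1,p}(0,1)$ and $\bbD u_j\rightharpoonup\bbD u$ in $L^p((0,1)^2)$ along the chosen subsequence, i.e.\ claim (3). I expect the crux to be exactly this identification: a.e.\ convergence alone does not determine a weak limit, and one must marry it to the uniform $L^p$-bound (equivalently, apply Mazur's lemma to a weakly convergent sub-subsequence and match it against the a.e.\ limit) to conclude that the weak limit is $\bbD u$ rather than some a priori distinct $g\in L^p((0,1)^2)$.
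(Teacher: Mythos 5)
Your proof is correct and follows essentially the same route as the paper: compactness from Theorem \ref{uno} gives strong $L^p$ convergence of $u_j$ along a subsequence, this transfers to a.e.\ convergence of the difference quotients $\bbD u_j\to\bbD u$ on $(0,1)^2$, and the uniform bound on $\{\bbD u_j\}$ in $L^p((0,1)^2)$ then identifies $\bbD u$ as the weak limit, giving $u\in NW^{1,p}(0,1)$. The only difference is one of presentation: the paper first extracts a weakly convergent subsequence $\bbD u_j\rightharpoonup\bU$ by reflexivity and then asserts $\bU=\bbD u$ from the pointwise convergence, leaving implicit precisely the identification argument (Vitali plus truncation/density, or Mazur) that you spell out in your third step.
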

\begin{proof}
By Theorem \ref{uno}, there is a subsequence, not relabeled, such that 
$$
u_j\to u\hbox{ in }L^p(0, 1),\quad \bbD u_j\rightharpoonup\bU\hbox{ in }L^p((0, 1)^2),
$$
for some $u\in L^p(0, 1)$, and $\bU\in L^p((0, 1)^2)$. 
But the first convergence implies the pointwise convergence, possibly for a further subsequence, $\bbD u_j\to\bbD u$ in $(0, 1)^2$. Hence $\bbD u=\bU$,  $u\in NW^{1, p}(0, 1)$, and $\bbD u_j\rightharpoonup\bbD u$ in $L^p((0, 1)^2)$. 
\end{proof}

\begin{theorem}[Theorem 8.2, \cite{dinezzapalatuccivaldinoci}]
Every function in $NW^p(0, 1)$, for $p>2$, is H\"older continuous with exponent $\alpha=(p-2)/p$. In particular, end-point conditions on $\{0, 1\}$ for functions in these spaces are well-defined.
\end{theorem}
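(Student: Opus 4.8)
The plan is to extract genuine (Hölder) pointwise information from the only hypothesis available, namely the finiteness of
$$
M := \|\bbD u\|_{L^p((0,1)^2)} = \left(\int_0^1\int_0^1 \frac{|u(X)-u(x)|^p}{|X-x|^p}\,dX\,dx\right)^{1/p},
$$
by passing through averages of $u$ on small intervals, in the spirit of the Morrey--Campanato embedding. The crucial numerical coincidence I would exploit is that here $s=1-1/p$ forces the Gagliardo kernel exponent to be exactly $1$, so that $|u(z)-u(w)|/|z-w|$ is precisely $|\bbD u(w,z)|$, whose $L^p$-norm over $(0,1)^2$ is $M$. For $x\in(0,1)$ and $0<\rho\le1$ I set $B(x,\rho)=(x-\rho,x+\rho)\cap(0,1)$ and $u_{x,\rho}=\frac{1}{|B(x,\rho)|}\int_{B(x,\rho)}u$; note that $|B(x,\rho)|\in[\rho,2\rho]$, so clipping at the endpoints $0,1$ is harmless.

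First I would prove the key oscillation estimate: for $0<\rho\le 1/2$,
$$
|u_{x,\rho}-u_{x,2\rho}| \le C\,M\,\rho^{(p-2)/p}.
$$
Writing $u_{x,\rho}-u_{x,2\rho}$ as the double average of $u(z)-u(w)$ over $B(x,\rho)\times B(x,2\rho)$, bounding it by the double average of $|u(z)-u(w)|$, and splitting $|u(z)-u(w)|=\frac{|u(z)-u(w)|}{|z-w|}\cdot|z-w|$, Hölder's inequality with exponents $p,p'$ produces the product of $\big(\int\int|u(z)-u(w)|^p/|z-w|^p\big)^{1/p}\le M$ and a weight factor $\big(\int\int|z-w|^{p'}\big)^{1/p'}$. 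Since $|z-w|\le C\rho$ on these intervals and each interval has length comparable to $\rho$, the bookkeeping of the powers of $\rho$ collapses to the exponent $-2+1+2(p-1)/p=(p-2)/p=\alpha$, exactly the claimed Hölder exponent, which is positive precisely because $p>2$. The identical computation applied to $B(x,r)\times B(y,r)$ with $|x-y|\le r$ (so the intervals are comparable and $|z-w|\le Cr$) yields $|u_{x,r}-u_{y,r}|\le C M r^\alpha$.

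Next I would telescope. Summing the oscillation estimate along the dyadic radii $\rho2^{-k}$ and using $\sum_k(\rho2^{-k})^\alpha=C_\alpha\rho^\alpha<\infty$ shows $(u_{x,\rho2^{-k}})_k$ is Cauchy; its limit $\tilde u(x)$ is the Lebesgue representative and satisfies $|\tilde u(x)-u_{x,\rho}|\le C M\rho^\alpha$ for every $\rho$. Finally, for $x,y\in(0,1)$ with $r=|x-y|$, the triangle inequality
$$
|\tilde u(x)-\tilde u(y)|\le |\tilde u(x)-u_{x,r}|+|u_{x,r}-u_{y,r}|+|u_{y,r}-\tilde u(y)|
$$
bounds each of the three terms by $C M r^\alpha$, giving $|\tilde u(x)-\tilde u(y)|\le C M |x-y|^{(p-2)/p}$. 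Hence $u$ has a representative that is Hölder continuous up to the closed interval $[0,1]$, so the traces at $0$ and $1$ are well defined.

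I expect the main obstacle to be the first step: arranging the Hölder splitting so that the powers of $\rho$ coming from the normalizing factors $1/|B|$ and from the weight integral $\int\int|z-w|^{p'}$ combine to exactly $\alpha=(p-2)/p$, and verifying that the endpoint clipping in $B(x,\rho)=(x-\rho,x+\rho)\cap(0,1)$ never destroys the comparability $|B(x,\rho)|\sim\rho$ on which that bookkeeping rests. Everything after the oscillation estimate, i.e. the dyadic telescoping and the three-term triangle inequality, is routine.
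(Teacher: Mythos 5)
Your proof is correct, and the key computation checks out: in the oscillation estimate the powers of $\rho$ combine as $-2$ (from the two normalizing factors) $+\,1$ (from the kernel splitting) $+\,2(p-1)/p$ (from the weight integral $\bigl(\int\!\!\int |z-w|^{p'}\bigr)^{1/p'}$), giving exactly $(p-2)/p$, which is positive precisely when $p>2$; the endpoint clipping is indeed harmless since $|B(x,\rho)|\ge\rho$ for all $x\in(0,1)$ and $\rho\le 1$; and the dyadic telescoping plus the three-term triangle inequality finish the Morrey--Campanato argument in the standard way. However, your route is genuinely different from the paper's, which in fact contains no proof: the paper first identifies $NW^{1,p}(0,1)$ with the fractional Sobolev space $W^{s,p}(0,1)$ for $s=1-1/p$ (the same numerical coincidence you note, namely that the Gagliardo kernel exponent $1+sp$ equals $p$ for this $s$), and then simply quotes Theorem 8.2 of \cite{dinezzapalatuccivaldinoci}, whose hypothesis $sp>1$ becomes $p-1>1$, i.e.\ $p>2$, and whose H\"older exponent $s-1/p$ becomes $(p-2)/p$. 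What your argument buys is self-containedness: you have in effect reconstructed, in one dimension and for this particular $s$, the Campanato-type oscillation proof by which the quoted embedding theorem is itself established in the reference. What the paper's route buys is economy and leverage: the same identification of spaces delivers at one stroke not only this H\"older embedding but also the compactness statement (Theorem 7.1 of \cite{dinezzapalatuccivaldinoci}) on which the paper's existence theory rests, so the single citation does more work than an ad hoc proof of the embedding alone.
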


\section{Non-local variational problems in one-dimension}\label{cuatro}
The important conclusions in the last section lead to realizing that variational problems of the form
\begin{equation}\label{nlfuncional}
\hbox{Minimize in }u\in NW^{1, p}_0(0, 1):\quad E(u)=\int_0^1 \int_0^1 W(x, u(x), \bbD u(x, X))\,dX\,dx
\end{equation}
are meaningful under the usual polynomial coercivity condition
\begin{equation}\label{coercividadnl}
C_0(|U|^p-1)\le W(x, u, U),\quad C_0>0, p>2,
\end{equation}
for a density
$$
W(x, u, U):(0, 1)\times\R\times\R\to\R
$$
which is measurable in $x$ and continuous in $(u, U)$.
We have chosen, for the sake of definiteness, vanishing end-point conditions. That is what is meant, as one would expect, by $NW^{1, p}_0(0, 1)$ in \eqref{nlfuncional}. 

Minimizing sequences $\{u_j\}$ are uniformly bounded in $NW^{1, p}(0, 1)$. By Corollary \ref{este}, there is a limit feasible $u\in NW^p(0, 1)$ with
$u_j\to u$ in $L^p(0, 1)$, and 
\begin{equation}\label{convergenciapuntual}
\bbD u_j(x, X)\to\bbD u(x, X)
\end{equation}
for a.e. pair $(x, X)\in (0, 1)$. 
This a.e. convergence points in the direction of the following surprising result. Note that in this statement we are not assuming the lower bound \eqref{coercividadnl}. 
\begin{theorem}\label{wls}
Let the integrand
$$
W(x, u, U):(0, 1)\times\R\times\R\to\R
$$
be measurable in the variable $x$, continuous in pairs $(u, U)$, and bounded from below by some constant.
\begin{enumerate}
\item The corresponding functional $E(u)$ in \eqref{nlfuncional} is  weak lower semicontinuous in $NW^{1, p}(0, 1)$.
\item The same functional $E(u)$ is lower semicontinuous in $L^p(0, 1)$. 
\item If, in addition,
\begin{equation}\label{growthq}
|W(x, u, U)|\le C(1+|U|^q),\quad q<p,
\end{equation}
then $E(u)$ is weak continuous in $NW^{1, p}(0, 1)$.
\end{enumerate}
\end{theorem}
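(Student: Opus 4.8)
The plan is to reduce all three statements to the pointwise a.e.\ convergence of the non-local gradients provided by Corollary~\ref{este}; this single structural fact is exactly what renders convexity superfluous. For part~(1), I would start from $u_j\rightharpoonup u$ in $NW^{1,p}(0,1)$, so that the sequence is bounded and Corollary~\ref{este} yields a subsequence (not relabeled) with $u_j\to u$ in $L^p(0,1)$ and $\bbD u_j(x,X)\to\bbD u(x,X)$ for a.e.\ $(x,X)$. Passing to a further subsequence gives $u_j(x)\to u(x)$ for a.e.\ $x$, hence for a.e.\ $(x,X)$ by Fubini. Continuity of $W$ in $(u,U)$ then forces $W(x,u_j(x),\bbD u_j(x,X))\to W(x,u(x),\bbD u(x,X))$ a.e.\ on $(0,1)^2$, and since $W$ is bounded below, Fatou's lemma delivers $E(u)\le\liminf_j E(u_j)$. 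The standard device of first extracting a subsequence realizing the $\liminf$ removes any dependence on the chosen subsequence.

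Part~(2) is even more direct and, notably, needs no coercivity. If $u_j\to u$ in $L^p(0,1)$, then along a subsequence $u_j(x)\to u(x)$ a.e., and the explicit form $\bbD u_j(x,X)=(u_j(X)-u_j(x))/(X-x)$ immediately gives $\bbD u_j(x,X)\to\bbD u(x,X)$ for a.e.\ $(x,X)$, the only exceptional set being the products of the null set with $(0,1)$ together with the diagonal. The identical Fatou argument then produces $E(u)\le\liminf_j E(u_j)$, with $E$ read as a functional into $(-\infty,+\infty]$; no a priori membership of $u$ in $NW^{1,p}(0,1)$ is required.

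For part~(3) I would upgrade lower semicontinuity to full continuity using the growth bound. Weak convergence in $NW^{1,p}(0,1)$ bounds $\|\bbD u_j\|_{L^p((0,1)^2)}$ uniformly, and since $q<p$ Hölder's inequality gives $\int_A|\bbD u_j|^q\le|A|^{1-q/p}C^q$, so the family $\{|\bbD u_j|^q\}$ is equi-integrable. The bound $|W(x,u_j,\bbD u_j)|\le C(1+|\bbD u_j|^q)$ then dominates the integrand by an equi-integrable family, and combined with the a.e.\ convergence established in part~(1), Vitali's convergence theorem yields $E(u_j)\to E(u)$.

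The argument is strikingly short, and the only real \emph{obstacle} is conceptual rather than technical: because the non-local gradient is a genuine difference quotient, strong $L^p$ convergence of $u_j$ forces the pointwise convergence of $\bbD u_j$, so Fatou applies verbatim and no convex envelope ever enters. The points that still demand care are the simultaneous a.e.\ convergence of $u_j(x)$ and $\bbD u_j(x,X)$ on a full-measure subset of $(0,1)^2$, the subsequence bookkeeping for the $\liminf$, and the verification of equi-integrability in part~(3).
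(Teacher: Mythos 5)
Your proof is correct, and for parts (1) and (2) it is essentially the paper's own argument: strong $L^p$ convergence (obtained from Corollary~\ref{este} in the weak case) forces a.e.\ convergence of the difference quotients $\bbD u_j$, continuity of $W$ in $(u,U)$ passes this to the integrands, and Fatou's lemma (legitimate on the finite measure space $(0,1)^2$ because $W$ is bounded below) gives the lower semicontinuity; your explicit handling of the simultaneous a.e.\ convergence of $u_j(x)$ and $\bbD u_j(x,X)$ and of the subsequence realizing the $\liminf$ only makes precise what the paper leaves implicit. Where you genuinely diverge is part (3): the paper disposes of it in one sentence, asserting that strict inequality in Fatou ``can only happen under concentration effects'' which the growth bound \eqref{growthq} excludes, without carrying out the argument. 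You instead prove it: the uniform $L^p$ bound on $\bbD u_j$ plus H\"older with exponent $p/q>1$ gives equi-integrability of $\{|\bbD u_j|^q\}$, the two-sided bound $|W|\le C(1+|U|^q)$ dominates the integrands by that equi-integrable family, and Vitali's convergence theorem upgrades a.e.\ convergence to convergence of the integrals. This is the standard rigorous formalization of the paper's ``no concentration'' heuristic, and your version is the one that would survive refereeing; the only cosmetic point left implicit is the final subsequence argument showing that convergence of $E(u_j)$ along every a.e.-convergent subsequence implies convergence of the full sequence, which you already flagged in part (1).
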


Note that there is no convexity  assumed on $W$.
\begin{proof}
The remarks above are already the basis for the proof, which is elementary at this point. 
The convergence $u_j\to u$ in $L^p(0, 1)$, implies the a.e. convergence \eqref{convergenciapuntual}.
Consequently, because of the continuity of $W$ with respect to variables $(u, U)$, 
$$
W(x, u_j(x), \bbD u_j(x, X))\to W(x, u(x), \bbD u(x, X))
$$
pointwise for a.e. $(x, X)\in(0, 1)^2$. If  $E(u_j)$ tends to infinity, there is nothing to be proved, as the conclusion is trivially true. If $\{E(u_j)\}$ is a bounded collection of numbers, the classical Fatou's lemma yields the claimed lower semicontinuity property
$$
E(u)\le\liminf_{j\to\infty} E(u_j).
$$
This covers the first two assertions. Concerning the third, just notice that the strict inequality in the previous argument with Fatou's lemma can only happen under concentration effects that are discarded, among other possible conditions, by a more restrictive growth condition on $W$ like \eqref{growthq}, if the weak convergence $u_j\rightharpoonup u$ takes place in $NW^{1, p}(0, 1)$.
\end{proof}

As a main consequence, we have a quite remarkable existence result for this kind of variational problems.
\begin{theorem}\label{existencia}
Consider problem \eqref{nlfuncional} for an integrand $W(x, u, U)$ which is measurable in $x$ and continuous in $(u, U)$, and satisfies \eqref{coercividadnl}. Suppose that the problem is not trivial ($E$ is finite for some feasible function). Then there are minimizers $u$ for \eqref{nlfuncional}, and minimizing sequences $\{u_j\}$ are such that 
\eqref{surconv} hold.
\end{theorem}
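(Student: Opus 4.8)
The plan is to run the direct method of the Calculus of Variations, so the real work is not in the compactness or semicontinuity (those are already packaged in Corollary \ref{este} and Theorem \ref{wls}) but in turning the energy bound along a minimizing sequence into a bound for the full $NW^{1,p}(0,1)$-norm, which is what feeds those two results. First I would observe that the problem is well-posed from below: the coercivity hypothesis \eqref{coercividadnl} forces $W\ge -C_0$ and hence $E(u)\ge -C_0$ for every feasible $u$, while non-triviality gives a feasible competitor of finite energy; therefore the infimum
\[
m=\inf\{E(u):u\in NW^{1,p}_0(0,1)\}
\]
is a finite real number. Fixing a minimizing sequence $\{u_j\}$ with $E(u_j)\to m$, the collection $\{E(u_j)\}$ is bounded, and integrating \eqref{coercividadnl} over $(0,1)^2$ (a set of measure one) gives
\[
E(u_j)\ge C_0\left(\|\bbD u_j\|_{L^p((0,1)^2)}^p-1\right),
\]
so that $\{\|\bbD u_j\|_{L^p((0,1)^2)}\}$ is bounded.

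The next, and genuinely non-routine, step is to convert this seminorm bound into a bound on the whole norm \eqref{naturalnorm}. The quantity $\|\bbD u\|_{L^p((0,1)^2)}$ is precisely the Gagliardo seminorm of $W^{1/q,p}(0,1)$ with $1/p+1/q=1$, as recorded in Section \ref{tres}, and since $p>2$ the Morrey-type embedding underlying the Hölder-continuity theorem quoted above controls the Hölder seminorm of exponent $\alpha=(p-2)/p$ by this Gagliardo seminorm: there is $C>0$ with $[u]_{C^{0,\alpha}}\le C\|\bbD u\|_{L^p((0,1)^2)}$ for all $u$. Invoking the boundary condition $u_j(0)=0$, this yields $\|u_j\|_{L^\infty(0,1)}\le [u_j]_{C^{0,\alpha}}\le C\|\bbD u_j\|_{L^p((0,1)^2)}$, hence a bound on $\|u_j\|_{L^p(0,1)}$ as well, so $\{u_j\}$ is bounded in $NW^{1,p}(0,1)$. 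I expect this to be the main obstacle, because it is a fractional Poincaré inequality in disguise: the seminorm alone cannot control $\|u\|_{L^p}$ (constants are in the kernel), and it is exactly the end-point constraints that make the full norm coercive.

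With the uniform bound in hand the argument closes quickly. By Corollary \ref{este}, after passing to a subsequence there is $u\in NW^{1,p}(0,1)$ for which the convergences \eqref{surconv} hold, namely $u_j\to u$ in $L^p(0,1)$ together with $\bbD u_j\to\bbD u$ a.e.\ and $\bbD u_j\rightharpoonup\bbD u$ weakly in $L^p((0,1)^2)$. To see that $u$ is feasible I would use that the bound of the previous paragraph makes $\{u_j\}$ uniformly bounded and equicontinuous in $C^{0,\alpha}$, so Arzelà–Ascoli gives uniform convergence along a further subsequence to a continuous representative of $u$; passing to the limit in $u_j(0)=u_j(1)=0$ then gives $u(0)=u(1)=0$, i.e.\ $u\in NW^{1,p}_0(0,1)$. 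Finally, Theorem \ref{wls}(1) yields $E(u)\le\liminf_j E(u_j)=m$, and feasibility forces $E(u)\ge m$, so $E(u)=m$ and $u$ is a minimizer, attained precisely along the convergences \eqref{surconv}.
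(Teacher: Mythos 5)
Your proof is correct and follows exactly the route the paper takes: the direct method, with Corollary \ref{este} supplying compactness and Theorem \ref{wls} supplying weak lower semicontinuity. The paper's own proof is a one-line appeal to the direct method (having asserted, without detail, that minimizing sequences are bounded in $NW^{1,p}(0,1)$ and that the limit is feasible); your fractional Poincar\'e argument (Gagliardo seminorm plus the end-point condition controls the full norm) and the Arzel\`a--Ascoli step showing the limit retains the boundary values are precisely the details the paper leaves implicit, and both are sound.
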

\begin{proof}
The proof is nothing but the direct application of the direct method to \eqref{nlfuncional}. 
\end{proof}

We cannot but conclude that both variational problems in Problems \ref{primero} and \ref{nlbolza} admit minimizers. We do not have any trouble accepting it for the former, but it is indeed a surprise for the latter. 

\section{Optimality}\label{cinco}
The study of optimality conditions for this kind of non-local variational problems lead to some unexpected answers too: optimality conditions are written in terms of integral equations, not differential equations. 

Let us place ourselves in a context where Theorem \ref{existencia} can be applied so that variational problem \eqref{nlfuncional} admits optimal solutions. 
Suppose that the integrand $W(x, u, U)$ is as smooth as we may need it to be for the calculations below to be valid.

Let $u\in NW^{1, p}(0, 1)$ be one such minimizer in a certain closed subspace of feasible functions in $NW^{1, p}(0, 1)$, and set $U\in NW^{1, \infty}_0(0, 1)$ for a feasible variation. As usual, the derivative of the section
$$
\epsilon\to \int_{(0, 1)^2}W(x, u(x)+\epsilon U(x), \bbD u(x, X)+\epsilon\bbD U(x, X))\,dX\,dx
$$
evaluated at $\epsilon=0$ must vanish. Since we are assuming whatever properties on $W$ for the derivation under the integral sign to be legitimate, we can write
\begin{equation}\label{optimalidad}
\int_{(0, 1)^2}[W_u(x, u(x), \bbD u(x, X))U(x)+W_U(x, u(x), \bbD u(x, X))\bbD U(x, X)]\,dX\,dx=0
\end{equation}
for all such $U(x)$. This is a well-defined double integral provided that
$$
|W_u(x, u, U)|\le C(1+|U|^{p-1}),\quad |W_U(x, u, U)|\le C(1+|U|^{p-1}).
$$
We examine the second term in this integral
$$
\int_{(0, 1)^2}\frac{W_U(x, u(x), \bbD u(x, X))}{X-x}(U(X)-U(x))\,dX\,dx.
$$
The inner single integrals
$$
\int_0^1\frac{W_U(x, u(x), \bbD u(x, X))}{X-x}\,dX
$$
for each fixed $x\in(0, 1)$, can be understood in a principal-value sense provided $W_U$ is 
continuous in all of its variables. Indeed, for $X$ near $x$, i.e. for $\epsilon$ small, 
$$
\int_{x-\epsilon}^{x+\epsilon}\frac{W_U(x, u(x), \bbD u(x, X))}{X-x}\,dX
$$
is approximately equal to
$$
W_U(x, u(x), u'(x))\int_{x-\epsilon}^{x+\epsilon}\frac1{X-x}\,dX=0.
$$
Hence, if we set
\begin{equation}\label{manipulacion}
\overline W(x, X)\equiv W_U(x, u(x), \bbD u(x, X)),
\end{equation}
and examine the integral
$$
\int_{(0, 1)^2}\overline W(x, X)\frac{U(X)-U(x)}{X-x}\,dX\,dx,
$$
which is the second full term in \eqref{optimalidad},
after a few simple, formal manipulations related to interchanging the order of integration, we find that the previous integral can be recast as
$$
-\int_{(0, 1)^2}\left[\frac{\overline W(X, x)+\overline W(x, X)}{X-x}\right]\,dX\, U(x)\,dx.
$$
If go back to \eqref{manipulacion}, and take back this fact to \eqref{optimalidad}, we end up with the condition
\begin{gather}
\int_0^1\int_0^1\left[W_u(x, u(x), \bbD u(x, X))+\right.\nonumber\\
\left.-\frac1{X-x}(W_U(x, u(x), \bbD u(x, X))+W_U(X, u(X), \bbD u(x, X)))\right]U(x)\,dX\,dx=0,\nonumber
\end{gather}
for every admissible variation $U\in NW^{1, \infty}_0(0, 1)$. Recall that $\bbD u(x, X)$ is symmetric. 
The arbitrariness of this test function $U$ leads to the condition
\begin{gather}
\int_0^1\left[W_u(x, u(x), \bbD u(x, X))+\right.\nonumber\\
\left.-\frac1{X-x}(W_U(x, u(x), \bbD u(x, X))+W_U(X, u(X), \bbD u(x, X)))\right]\,dX=0,\nonumber
\end{gather}
valid for a.e. $x\in(0, 1)$. For every such fixed $x\in(0, 1)$, these integrals should be understood in a principal-value sense, as indicated above, whenever necessary. 
The end-point conditions are irrelevant in these manipulations. 

Seeking some parallelism with the local case, we stick to the following definition following \cite{bellidocuetomora1}, \cite{ponce3}, \cite{shiehspector}, \cite{silhavy}.
\begin{definition}
For a measurable function 
$$
F(x, X):(0, 1)^2\to\R
$$
we define its non-local divergence as the function
$$
\ndv F(x, X)=\frac1{X-x}(F(x, X)+F(X, x)).
$$
\end{definition}
The previous manipulations show the following fact.
\begin{theorem}\label{inteq}
Let $W(x, u, U)$ be a $\cC^1$-integrand with respect to pairs $(u, U)$, such that
\begin{gather}
C_0(|U|^p-1)\le W(x, u, U)\le C(|U|^p+1), \nonumber\\
|W_u(x, u, U)|\le C(|U|^{p-1}+1),\quad |W_U(x, u, U)|\le C(|U|^{p-1}+1),\nonumber
\end{gather}
for some exponent $p>1$, and constants $0<C_0\le C$.
Suppose $u\in NW^{1, p}(0, 1)$ is a minimizer for  \eqref{nlfuncional} in a certain closed subspace of $NW^{1, p}(0, 1)$. Then
\begin{equation}\label{varinteq}
\int_0^1\left[-\ndv W_U(x, u(x), \bbD u(x, X))+W_u(x, u(x), \bbD u(x, X)\right]\,dX=0,
\end{equation}
for a.e. $x\in(0, 1)$, where the integrals of the first term should be understood in a principal-value sense whenever necessary.
\end{theorem}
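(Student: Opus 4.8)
The plan is to carry out the standard first-variation computation, the nonstandard ingredient being the symmetrization that turns the term coming from $\bbD U$ into a principal-value integral tested against $U(x)$. First I would fix the minimizer $u$ and an arbitrary admissible variation $U\in NW^{1,\infty}_0(0,1)$, observe that $u+\epsilon U$ stays in the given closed subspace for all $\epsilon$, and set $g(\epsilon)=E(u+\epsilon U)$. Since $u$ minimizes, $g'(0)=0$ once $g$ is shown differentiable at $0$.

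The first technical step is to justify differentiation under the integral sign to obtain \eqref{optimalidad}. Using that $W$ is $\cC^1$ together with the bounds $|W_u|,|W_U|\le C(|U|^{p-1}+1)$, the mean value theorem writes the difference quotient as $W_u(\cdots+\theta\epsilon U)\,U+W_U(\cdots+\theta\epsilon\bbD U)\,\bbD U$ for some $\theta\in(0,1)$. As $U,\bbD U\in L^\infty$ and $\bbD u\in L^p((0,1)^2)$, for $|\epsilon|\le 1$ this is dominated by $C(|\bbD u|^{p-1}+|\bbD U|^{p-1}+1)(\|U\|_\infty+\|\bbD U\|_\infty)\in L^1((0,1)^2)$, since $|\bbD u|^{p-1}\in L^{p'}\subset L^1$. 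Dominated convergence yields \eqref{optimalidad}, and the same bounds show both resulting terms are absolutely convergent double integrals.

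The heart of the argument is the second term. Writing $\overline W(x,X)=W_U(x,u(x),\bbD u(x,X))$ and $\bbD U(x,X)=(U(X)-U(x))/(X-x)$, I would symmetrize: swapping $x\leftrightarrow X$ in the absolutely convergent integral $\int\!\int\overline W\,\bbD U$ and averaging shows it equals $\tfrac12\int\!\int S(x,X)\,\bbD U(x,X)\,dX\,dx$ with $S(x,X)=\overline W(x,X)+\overline W(X,x)$ symmetric. Splitting $U(X)-U(x)$ and relabeling the $U(X)$-piece via the symmetry of $S$ collapses this to $-\int_0^1 U(x)\bigl[\int_0^1 S(x,X)/(X-x)\,dX\bigr]dx=-\int_0^1 U(x)\int_0^1 \ndv\,\overline W(x,X)\,dX\,dx$. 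The crucial observation (already used around \eqref{manipulacion}) is that by continuity of $W_U$ and symmetry of $\bbD u$, the inner integrand behaves like a constant multiple of $1/(X-x)$ near $X=x$, so the inner integral exists in the principal-value sense even though the individual split pieces are only conditionally convergent; the relabeling is legitimate precisely because the combined integral is absolutely convergent.

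Combining with the first term of \eqref{optimalidad} gives $\int_0^1 U(x)\bigl[\int_0^1(W_u-\ndv\,W_U)\,dX\bigr]dx=0$ for every $U\in NW^{1,\infty}_0(0,1)$. As this class contains all smooth compactly supported functions, the fundamental lemma of the calculus of variations forces the bracketed coefficient to vanish for a.e. $x$, which is exactly \eqref{varinteq}. I expect the main obstacle to be the rigorous handling of the principal values: the formal cancellation near the diagonal presupposes that $\bbD u(x,X)$ has a limit as $X\to x$, which need not hold for $u\in NW^{1,p}=W^{1-1/p,p}$. Making the symmetrization airtight therefore requires keeping the term as an absolutely convergent double integral and passing to the principal-value form only at the very end, as the statement's caveat ``whenever necessary'' already signals.
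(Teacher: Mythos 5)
Your proposal follows essentially the same route as the paper: compute the first variation to get \eqref{optimalidad}, symmetrize the $\bbD U$-term by swapping $x\leftrightarrow X$ (the paper's ``formal manipulations related to interchanging the order of integration'') so that it becomes $-\int_0^1 U(x)\int_0^1 \ndv\,\overline W(x,X)\,dX\,dx$, and then invoke the arbitrariness of $U$ to localize. In fact your write-up is more careful than the paper's own derivation, which is explicitly formal: you justify differentiation under the integral sign via the growth bounds and dominated convergence, and you correctly flag that the paper's principal-value heuristic (replacing $\bbD u(x,X)$ by $u'(x)$ near the diagonal) presupposes a pointwise derivative that a general $u\in NW^{1,p}=W^{1-1/p,p}$ need not have, so the absolutely convergent combined form must be kept until the final step.
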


To gain a bit of familiarity and realize what kind of integral equation these are, let us explore the form of this condition for the particular case in our Problem \ref{primero} in which, for the sake of simplicity in the computations, we take $p=2$ and 
$$
W(x, u, U)=\frac12U^2,\quad W_u=0,\quad W_U=U.
$$
The previous condition simplifies, after a few simple manipulations, to
\begin{equation}\label{inteqo}
\int_0^1\frac{u(X)-u(x)}{(X-x)^2}\,dX=0,\quad\hbox{ a.e. }x\in(0, 1).
\end{equation}
One would be tempted to separate the two integrals so as to write the condition as a more explicit integral equation. However, this separation is meaningless because the integral
$$
\int_0^1\frac1{(X-x)^2}\,dX
$$
is not finite for $X$ near $x$. 

Condition \eqref{inteqo} is definitely some sort of integral equation, but of a very special nature. In this form, no classic framework in the field of Integral Equations (\cite{zemyan}) seems to match \eqref{inteqo}. One thing is however clear: the admissible function $u(x)=x$ cannot be a minimizer because the integral
$$
\int_0^1\frac1{X-x}\,dX
$$
does not vanish for every $x\in(0, 1)$. This is elementary to check. 

For Problem \ref{nlbolza}, we find the impressive integral equation, after a few algebraic manipulations,
$$
\frac{u(x)}2=\int_0^1\frac{u(X)-u(x)}{(X-x)^2}\left[\frac{(u(X)-u(x))^2}{(X-x)^2}-1\right]\,dX.
$$
Note that the trivial function $u\equiv0$ is a solution.

\section{Integral equations}\label{seis}
The classical theory of Integral Equations in one independent variable (\cite{zemyan}) focuses on functional equations of the form
\begin{equation}\label{geninteq}
h(x)u(x)=f(x)+\int_a^{b(x)}K(x, X)u(X)\,dX,
\end{equation}
for functions $h(x)$, $f(x)$, and $b(x)$. $a$ is a real number, and $K(x, X)$ is the kernel of the equation. The nature of the three functions $h$, $f$ and $b$, and the properties of the kernel $K$ determine the type of equation (homogeneous/non-homogeneous, Fredholm, Voterra, of the first/second kind, etc), and, eventually, its understanding and potential methods of solution. It is not clear how an integral equation of the form in Theorem \ref{inteq} could be recast to fit the form \eqref{geninteq}. 

\begin{definition}
An integral equation is called variational if there is a $\cC^1$-function
$$
W(x, u, U):(0, 1)\times\R\times\R\to\R,
$$
with continuous partial derivatives $W_u(x, u, U)$ and $W_U(x, u, U)$, such that the integral equation is written in the form \eqref{varinteq}.
\end{definition}
We can translate Theorems \ref{existencia} and \ref{inteq} into an existence theorem for this kind of integral equations.
\begin{theorem}
Let 
$$
W(x, u, U):(0, 1)\times\R\times\R\to\R
$$
be a $\cC^1$-function in pairs $(u, U)$ such that
$$
C_0(|U|^p-1)\le W(x, u, U)\le C(|U|^p+1),\quad C\ge C_0>0, p>2,
$$
and
$$
|W_u(x, u, U)|\le C(|U|^{p-1}+1),\quad |W_U(x, u, U)|\le C(|U|^{p-1}+1).
$$
Then for arbitrary end-point conditions 
$$
u(0)=u_0,\quad u(1)=u_1,
$$ 
the variational integral equation
$$
\int_0^1\left[-\ndv W_U(x, u(x), \bbD u(x, X))+W_u(x, u(x), \bbD u(x, X)\right]\,dX=0
$$
for a.e. $x\in(0, 1)$ admits solutions. 
\end{theorem}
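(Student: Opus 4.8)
The statement is essentially a repackaging of the existence theorem (Theorem \ref{existencia}) and the optimality conditions (Theorem \ref{inteq}): the plan is to produce a solution of the variational integral equation by exhibiting a minimizer of the associated non-local functional and then invoking the fact that every such minimizer satisfies \eqref{varinteq}. Concretely, I would fix the admissible class
$$
\A=\{u\in NW^{1,p}(0,1):u(0)=u_0,\ u(1)=u_1\},
$$
which is meaningful precisely because $p>2$: by the H\"older embedding (Theorem 8.2 of \cite{dinezzapalatuccivaldinoci}) the endpoint evaluations are continuous on $NW^{1,p}(0,1)$, so $\A$ is a nonempty, closed, affine subset, and minimizing $E$ over $\A$ is exactly the prescribed-endpoint problem.

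To run the direct method over $\A$, I first check non-triviality. The affine interpolant $\ell(x)=u_0+(u_1-u_0)x$ belongs to $\A$ and has constant non-local gradient $\bbD\ell(x,X)=u_1-u_0$; the upper bound $W(x,u,U)\le C(|U|^p+1)$ then gives $E(\ell)\le C(|u_1-u_0|^p+1)<\infty$. Next, the lower bound $C_0(|U|^p-1)\le W$ forces any minimizing sequence $\{u_j\}\subset\A$ to be bounded in $NW^{1,p}(0,1)$; Corollary \ref{este} extracts a subsequence with $u_j\to u$ in $L^p(0,1)$, $\bbD u_j\to\bbD u$ a.e., and $\bbD u_j\rightharpoonup\bbD u$ in $L^p((0,1)^2)$, and Theorem \ref{wls} delivers $E(u)\le\liminf_j E(u_j)$. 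The only point requiring care, and the only place where one steps beyond a verbatim appeal to Theorem \ref{existencia} (which was stated for vanishing data), is the verification that $u\in\A$. This follows because a bound in $NW^{1,p}(0,1)$ with $p>2$ is a uniform H\"older bound, so Arzel\`a--Ascoli upgrades the $L^p$-convergence to uniform convergence along a further subsequence, whence the endpoint values pass to the limit. Equivalently, one may subtract $\ell$, set $\widetilde{W}(x,v,V)=W(x,v+\ell(x),V+(u_1-u_0))$, note that $\widetilde W$ inherits the same coercivity and growth bounds with adjusted constants, and apply Theorem \ref{existencia} directly to $\widetilde W$ over $NW^{1,p}_0(0,1)$, recovering $u=v+\ell\in\A$ as a minimizer of $E$.

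Finally, $u$ is a minimizer of $E$ over the closed affine class $\A$, whose admissible variations are exactly the functions $U\in NW^{1,\infty}_0(0,1)$ used in the derivation of Theorem \ref{inteq}, and the hypotheses on $W$, $W_u$, $W_U$ in the present statement coincide with those of that theorem. Hence Theorem \ref{inteq} applies and yields \eqref{varinteq} for a.e.\ $x\in(0,1)$, which is the desired conclusion. I expect the main obstacle to be purely organizational rather than analytical: confirming that the derivation of Theorem \ref{inteq} never used vanishing boundary data (the text already flags that ``the end-point conditions are irrelevant in these manipulations'') and handling the preservation of the endpoint constraints in the limit via the compact embedding of $NW^{1,p}(0,1)$, $p>2$, into H\"older-continuous functions.
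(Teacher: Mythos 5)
Your proposal is correct and follows exactly the route the paper intends: the paper offers no separate proof of this theorem, merely noting that it is a ``translation'' of Theorems \ref{existencia} and \ref{inteq}, which is precisely the combination you carry out. Your additional care with the non-vanishing endpoint data (the affine shift $\widetilde W(x,v,V)=W(x,v+\ell(x),V+(u_1-u_0))$, or equivalently the H\"older-embedding argument showing the constraint class is closed under the convergence of Corollary \ref{este}) fills in a detail the paper leaves implicit, but it does not constitute a different approach.
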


We go back to our basic example \eqref{inteqo}  to perform some simple formal manipulations, again taking $p=2$. The pecularities of such an integral equation make it impossible to follow some of the methods that are used for more standard integral equations (\cite{zemyan}). In particular, integral transform techniques seem out of context as the interval of integration is finite, while the reduction to some kind of differential equation by direct differentiation with respect to the variable $x$ looks hopeless too. If we are contented with some sort of approximation, then we can play with it in several ways. 
It is legitimate a first integration by parts to find
$$
\int_0^1\frac{u'(X)}{X-x}\,dX=\frac1{1-x}-\frac1{x(1-x)}u(x),
$$
or even better
\begin{equation}\label{inteqo2}
\int_0^1\frac{x(1-x)}{X-x}u'(X)\,dX=x-u(x).
\end{equation}
The integral in the left-hand side ought to be understood in a principal-value sense. If we put
$$
u'(X)=v(X),\quad \int_0^1 v(X)\,dX=1,
$$
then, for the kernel
$$
K(x, X)=\frac{x(1-x)}{X-x}+\chi_{(0, x)}(X),
$$
where $\chi_{(0, x)}(X)$ is the indicator function of the interval $(0, x)$, then \eqref{inteqo2} becomes
$$
\int_0^1 K(x, X)v(X)\,dX=x.
$$

To find some approximation of the function we are searching for, let us go back to \eqref{inteqo}, and write the approximation
$$
u(X)-u(x)\sim u'(x)(X-x)+\frac12u''(x)(X-x)^2.
$$
Then \eqref{inteqo} becomes
$$
u'(x)\int_0^1\frac1{X-x}\,dX+\frac12u''(x)\sim0\hbox{ in }(0, 1),
$$
where the integral is again interpreted in a principal-value sense. We are led to consider the second-order ODE
$$
\log\frac{1-x}xu'(x)+\frac12u''(x)=0\hbox{ in }(0, 1),
$$
which after some elementary manipulations is transformed into
$$
u'(x)=kx^{2x}(1-x)^{2(1-x)},\quad x\in(0, 1),
$$
where the constant $k$ is chosen so that
$$
k^{-1}=\int_0^1x^{2x}(1-x)^{2(1-x)}\,dx.
$$
Check this profile in Figure \ref{boceto} for $k=2$. 
\begin{figure}[b]
\includegraphics[scale=0.35]{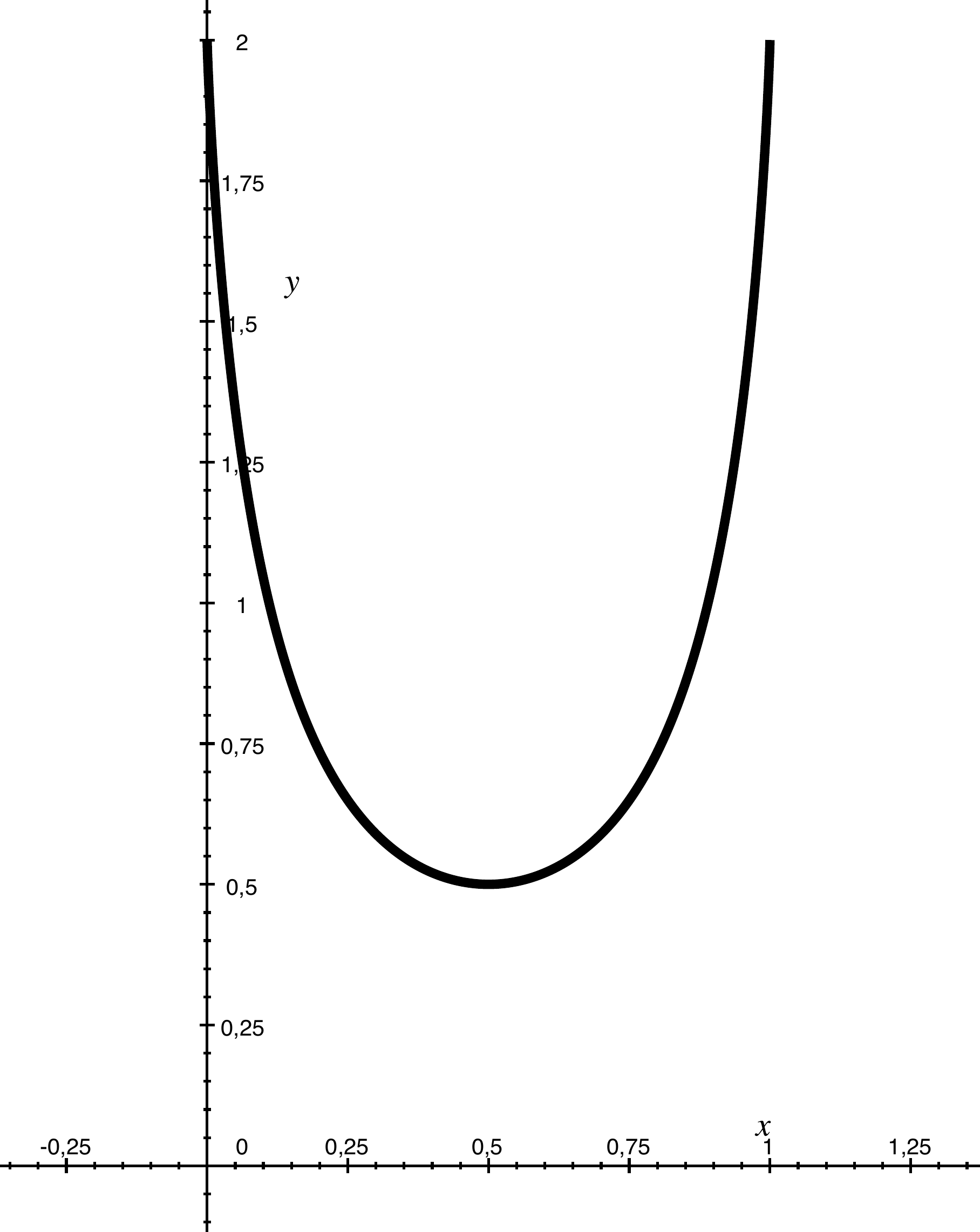}
\caption{An approximation of the derivative of the optimal solution for the classical quadratic, homogeneous case.} \label{boceto}
\end{figure}

\section{Approximation of optimal solution for simple examples}
Even though our existence Theorem \ref{existencia} yields optimal solution for non-local variational problems of the kind considered here, when the integrand is not (strictly) convex one misses three main points: uniqueness, sufficiency of optimality conditions, and reliable numerical approximation. One can hardly rely on numerical calculations for the optimal solutions of Problem \ref{nlbolza}, but one can go through simple approximation schemes for convex problems. 

For the sake of illustration, we show results for Problem \ref{primero} for the exponent $p=2$, and some easy variation.
\begin{enumerate}
\item The unique optimal profile for Problem \ref{primero} is depicted in Figure \ref{primeroo}. Note how, qualitatively, its derivatives yields the graph in Figure \ref{boceto}. 
\begin{figure}[b]
\includegraphics[scale=0.25]{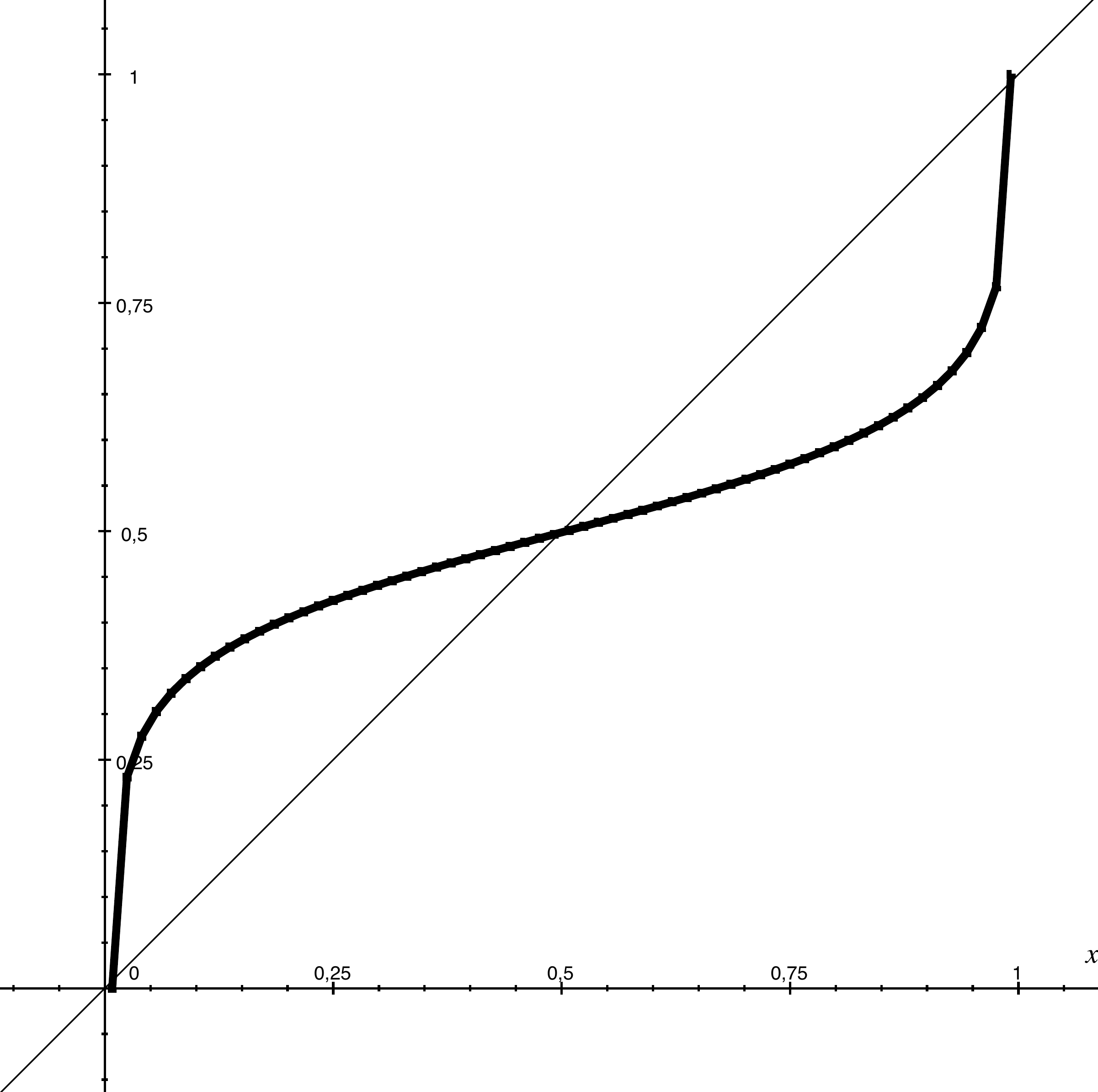}
\caption{The classical quadratic, homogeneous case.} \label{primeroo}
\end{figure}

\item We look at the problem
$$
E(u)=\int_0^1\int_0^1\frac12\left(\frac{u(x)-u(y)}{x-y}\right)^2\,dx\,dy+8\int_0^1u(x)^2\,dx
$$
again under end-point conditions $u(0)=0$, $u(1)=1$. The unique solution for the corresponding local problem
$$
I(u)=\int_0^1\left[\frac12 u'(x)^2+8u(x)^2\right]\,dx
$$
is 
$$
u(x)=\frac{e^4}{e^8-1}(e^{4x}-e^{-4x}).
$$
Both are compared in Figure \ref{segundo}. 
\begin{figure}[b]
\includegraphics[scale=0.4]{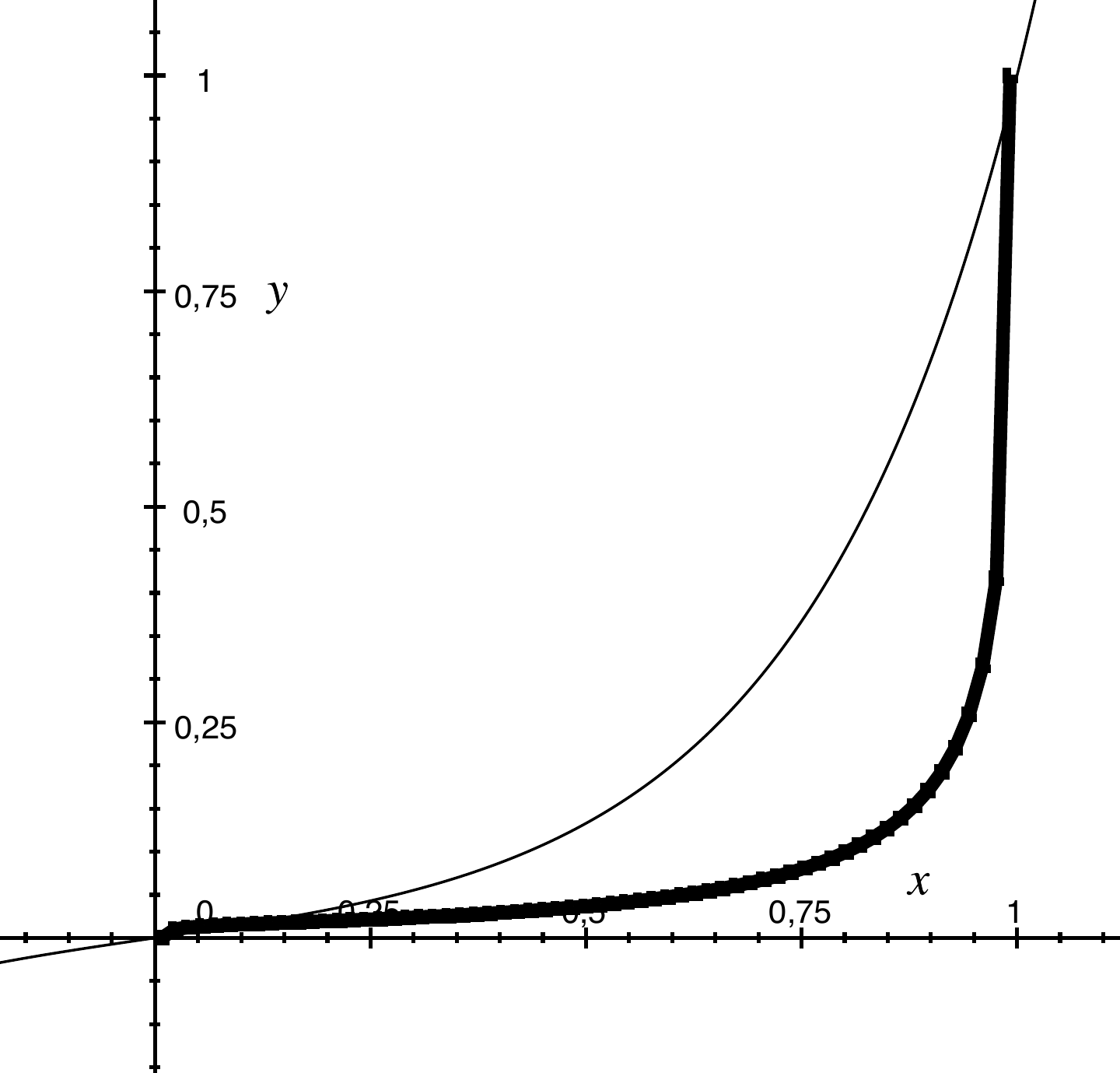}
\caption{A variant of the quadratic case.} \label{segundo}
\end{figure}
\item As indicated above, it is not possible to perform reliable numerical calculations for the non-convex case Problem \ref{nlbolza}, either with the lower-order term or without it. Check Figure \ref{tercero} for a couple of simulations for a functional without the lower-order term, starting from the trivial map. The difference of the two picture is in the discretization used: the one on the right used as much as twice elements than the one on the left, and yet the computations were unable to produce finer oscillations. The two drawings are indistinguishable. This fact has to be taken with extreme caution. What is true is that, according to our Theorem \ref{existencia}, there are minimizers for such a non-convex problem which, presumably, would show a certain finite number of oscillations. This is also true for the functional with the lower-order contribution. 
\begin{figure}[b]
\includegraphics[scale=0.2]{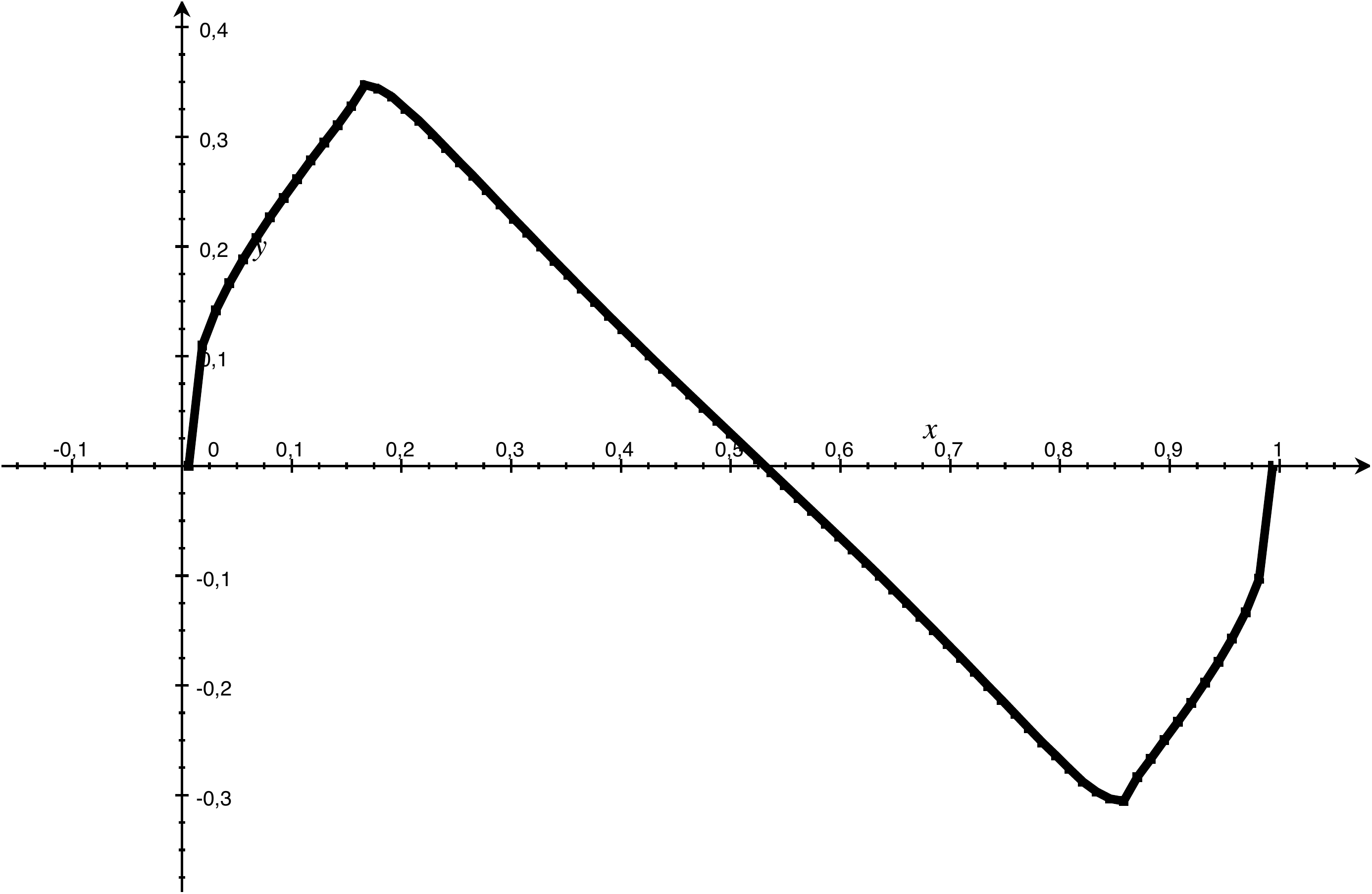}\quad 
\includegraphics[scale=0.2]{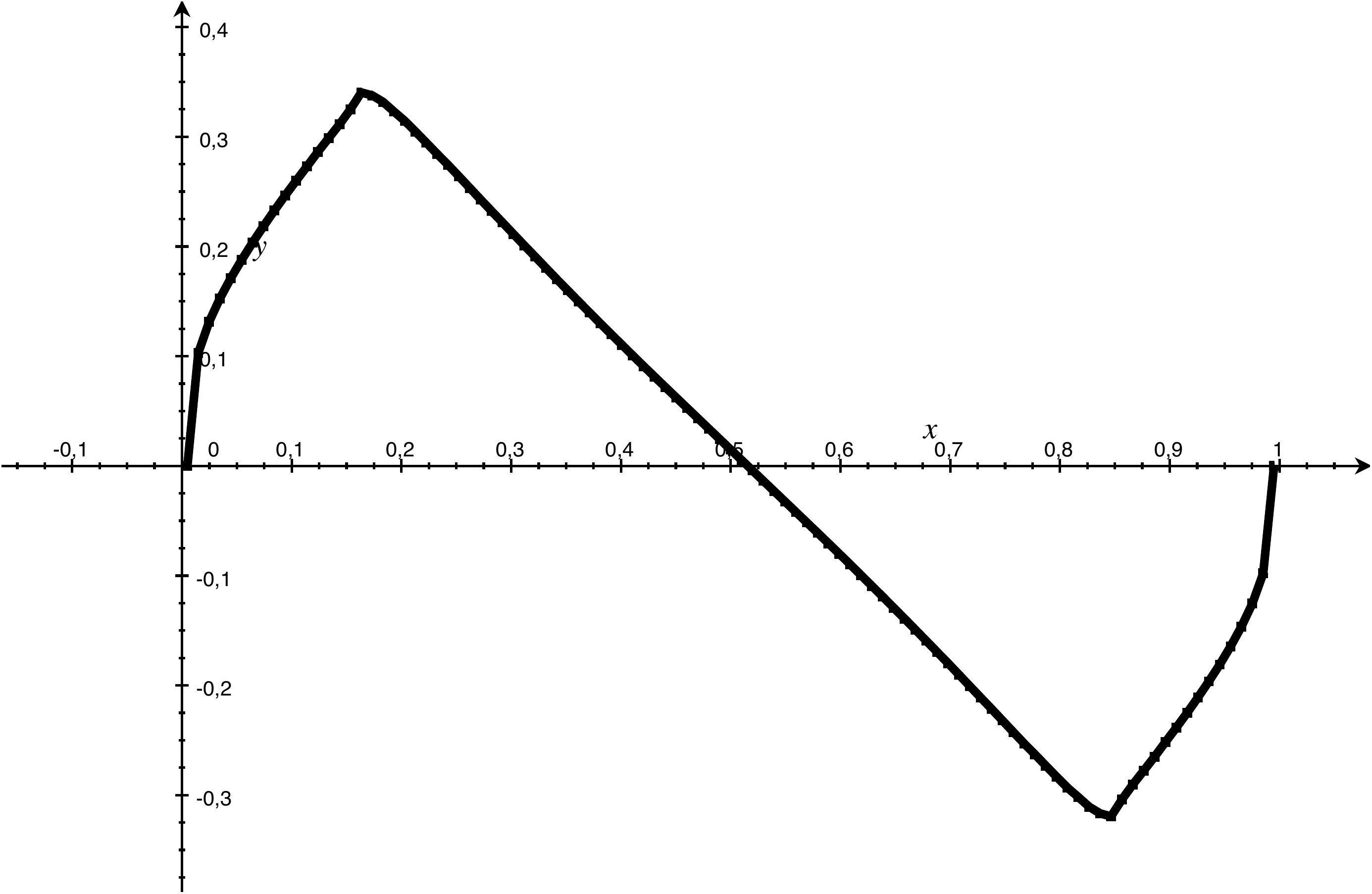}
\caption{The non-convex case.} \label{tercero}
\end{figure}
\end{enumerate}


\begin{thebibliography}{99}
\bibitem{anza} Anza Hafsa, Omar; Mandallena, Jean-Philippe; Michaille, Gérard Continuity theorem for non-local functionals indexed by Young measures and stochastic homogenization. J. Math. Pures Appl. (9) 136 (2020), 158–202. 
\bibitem{bellidocuetomora1} Bellido, José C.; Cueto, Javier; Mora-Corral, Carlos Fractional Piola identity and polyconvexity in fractional spaces. Ann. Inst. H. Poincaré Anal. Non Linéaire 37 (2020), no. 4, 955–981. 
\bibitem{bellidocuetomora2} Bellido, José C.; Cueto, Javier; Mora-Corral, Carlos Bond-based peridynamics does not converge to hyperelasticity as the horizon goes to zero, J. Elasticity, 2020 (in press).
\bibitem{bellidomora1}  Bellido, José C.; Mora-Corral, Carlos Existence for nonlocal variational problems in peridynamics. SIAM J. Math. Anal. 46 (2014), no. 1, 890–916. 
\bibitem{bellidomora2} Bellido, José C.; Mora-Corral, Carlos Lower semicontinuity and relaxation via Young measures for nonlocal variational problems and applications to peridynamics. SIAM J. Math. Anal. 50 (2018), no. 1, 779–809. 
\bibitem{bellidomorapedregal} Bellido, José C.; Mora-Corral, Carlos; Pedregal, Pablo Hyperelasticity as a $\Gamma$-limit of peridynamics when the horizon goes to zero. Calc. Var. Partial Differential Equations 54 (2015), no. 2, 1643–1670.
\bibitem{boulanger}  Boulanger, Jérôme; Elbau, Peter; Pontow, Carsten; Scherzer, Otmar Non-local functionals for imaging. Fixed-point algorithms for inverse problems in science and engineering, 131–154, Springer Optim. Appl., 49, Springer, New York, 2011.
\bibitem{bourgainbrezismironescu}  Bourgain, Jean; Brezis, Haim; Mironescu, Petru Another look at Sobolev spaces. Optimal control and partial differential equations, 439–455, IOS, Amsterdam, 2001.
\bibitem{braides} Braides, A.; Dal Maso, G. Non-local approximation of the Mumford-Shah functional. Calc. Var. Partial Differential Equations 5 (1997), no. 4, 293–322. 
\bibitem{branro}  Brandon, D., Rogers, R. C. ,Nonlocal regularization of L. C. Young's tacking problem. Appl. Math. Optim. 25 (1992), no. 3, 287–301.
\bibitem{brezis3} Brezis, Haïm; Nguyen, Hoai-Minh Non-local functionals related to the total variation and connections with image processing. Ann. PDE 4 (2018), no. 1, Paper No. 9, 77 pp. 
\bibitem{brezis2} Brezis, Haïm; Nguyen, Hoai-Minh Non-local, non-convex functionals converging to Sobolev norms. Nonlinear Anal. 191 (2020), 111626, 9 pp.
\bibitem{brezis1}  Brezis, Haïm; Nguyen, Hoai-Minh; $\Gamma$-convergence of non-local, non-convex functionals in one dimension. Commun. Contemp. Math. 22 (2020), no. 7, 1950077, 27 pp.
\bibitem{cortesani} Cortesani, Guido Sequences of non-local functionals which approximate free-discontinuity problems. Arch. Rational Mech. Anal. 144 (1998), no. 4, 357–402.
\bibitem{dacorogna}  Dacorogna, Bernard Direct methods in the calculus of variations. Second edition. Applied Mathematical Sciences, 78. Springer, New York, 2008. 
\bibitem{dinezzapalatuccivaldinoci}  Di Nezza, Eleonora; Palatucci, Giampiero; Valdinoci, Enrico Hitchhiker's guide to the fractional Sobolev spaces. Bull. Sci. Math. 136 (2012), no. 5, 521–573
\bibitem{elbau} P. Elbau, Sequential Lower Semi-Continuity of Non-Local Functionals, preprint, arXiv: 1104.2686, 2011.
\bibitem{eringen} Eringen, A. Cemal Nonlinear theory of continuous media. McGraw-Hill Book Co., New York-Toronto-London 1962 xii+477 pp.
\bibitem{gobbino} Gobbino, Massimo Non-local approximation of functionals: variational and evolution problems. Boll. Unione Mat. Ital. Sez. B Artic. Ric. Mat. (8) 3 (2000), no. 2, 315–324.
\bibitem{kreisbeckzappale1} Kreisbeck, C., Zappale, E. Lower semicontinuity and relaxation of $L^\infty$-functionals, Calc. Var. PDE 59:138 (2020), 36 pp.
\bibitem{kreisbeckzappale2}  Kreisbeck, C., Zappale, E. Loss of double-integral character during relaxation, SIAM J. Math. Anal., (in press).
\bibitem{lussardi1}  Lussardi, Luca; Vitali, Enrico Non-local approximation of free-discontinuity functionals with linear growth: the one-dimensional case. Ann. Mat. Pura Appl. (4) 186 (2007), no. 4, 721–744. 
\bibitem{lussardi} Lussardi, Luca An approximation result for free discontinuity functionals by means of non-local energies. Math. Methods Appl. Sci. 31 (2008), no. 18, 2133–2146.
\bibitem{mengeshadu1}  Mengesha, Tadele; Du, Qiang On the variational limit of a class of nonlocal functionals related to peridynamics. Nonlinearity 28 (2015), no. 11, 3999–4035.
\bibitem{mengeshadu2}  Mengesha, Tadele; Du, Qiang Characterization of function spaces of vector fields and an application in nonlinear peridynamics. Nonlinear Anal. 140 (2016), 82–111.
\bibitem{moratellini} Mora-Corral, Carlos; Tellini, Andrea Relaxation of a scalar nonlocal variational problem with a double-well potential. Calc. Var. Partial Differential Equations 59 (2020), no. 2, Paper No. 67, 30 pp.
\bibitem{pedregal1} Pedregal, Pablo Nonlocal variational principles. Nonlinear Anal. 29 (1997), no. 12, 1379–1392.
\bibitem{pedregal} Pedregal, Pablo Parametrized measures and variational principles. Progress in Nonlinear Differential Equations and their Applications, 30. Birkhäuser Verlag, Basel, 1997.
\bibitem{ponce1} Ponce, Augusto C. A new approach to Sobolev spaces and connections to $\Gamma$-convergence. Calc. Var. Partial Differential Equations 19 (2004), no. 3, 229–255.
\bibitem{ponce2} Ponce, Augusto C. An estimate in the spirit of Poincaré's inequality. J. Eur. Math. Soc. (JEMS) 6 (2004), no. 1, 1–15.
\bibitem{ponce3} Ponce, Augusto C. Elliptic PDEs, measures and capacities. From the Poisson equations to nonlinear Thomas-Fermi problems. EMS Tracts in Mathematics, 23. European Mathematical Society (EMS), Zürich, 2016. 
\bibitem{rindler}  Rindler, Filip Calculus of variations. Universitext. Springer, Cham, 2018.
\bibitem{shiehspector}  Shieh, Tien-Tsan; Spector, Daniel E. On a new class of fractional partial differential equations, I and II. Adv. Calc. Var. 8 (2015), no. 4, 321–336, Adv. Calc. Var. 11 (2018), no. 3, 289–307.
\bibitem{silhavy}  Šilhavý, M. Fractional vector analysis based on invariance requirements (critique of coordinate approaches). Contin. Mech. Thermodyn. 32 (2020), no. 1, 207–228.
\bibitem{silling} Silling, S. A. Reformulation of elasticity theory for discontinuities and long-range forces. J. Mech. Phys. Solids 48 (2000), no. 1, 175–209.
\bibitem{sillingetal}  Silling, S. A.; Epton, M.; Weckner, O.; Xu, J.; Askari, E. Peridynamic states and constitutive modeling. J. Elasticity 88 (2007), no. 2, 151–184. 
\bibitem{zemyan} Zemyan, Stephen M. The classical theory of integral equations. A concise treatment. Birkhäuser/Springer, New York, 2012.
\end{thebibliography}
\end{document}